\newcommand{\kom}[1]{}
\renewcommand{\kom}[1]{{\bf [#1]}}
 \def\1{\raisebox{2pt}{\rm{$\chi$}}}
\newtheorem{theorem}{Theorem}[section]
\newtheorem{lemma}[theorem]{Lemma}
\newtheorem{definition}[theorem]{Definition}
\newtheorem{remark}[theorem]{Remark}
 \def\1{\raisebox{2pt}{\rm{$\chi$}}}
\newcommand{\osc}{\operatornamewithlimits{osc}}
\def\vint_#1{\mathchoice%
          {\mathop{\kern 0.2em\vrule width 0.6em height 0.69678ex depth -0.58065ex
                  \kern -0.8em \intop}\nolimits_{\kern -0.4em#1}}%
          {\mathop{\kern 0.1em\vrule width 0.5em height 0.69678ex depth -0.60387ex
                  \kern -0.6em \intop}\nolimits_{#1}}%
          {\mathop{\kern 0.1em\vrule width 0.5em height 0.69678ex
              depth -0.60387ex
                  \kern -0.6em \intop}\nolimits_{#1}}%
          {\mathop{\kern 0.1em\vrule width 0.5em height 0.69678ex depth -0.60387ex
                  \kern -0.6em \intop}\nolimits_{#1}}}
\def\vintslides_#1{\mathchoice%
          {\mathop{\kern 0.1em\vrule width 0.5em height 0.697ex depth -0.581ex
                  \kern -0.6em \intop}\nolimits_{\kern -0.4em#1}}%
          {\mathop{\kern 0.1em\vrule width 0.3em height 0.697ex depth -0.604ex
                  \kern -0.4em \intop}\nolimits_{#1}}%
          {\mathop{\kern 0.1em\vrule width 0.3em height 0.697ex depth -0.604ex
                  \kern -0.4em \intop}\nolimits_{#1}}%
          {\mathop{\kern 0.1em\vrule width 0.3em height 0.697ex depth -0.604ex
                  \kern -0.4em \intop}\nolimits_{#1}}}
\newcommand{\kint}{\vint}
\newcommand{\aveint}[2]{\mathchoice%
          {\mathop{\kern 0.2em\vrule width 0.6em height 0.69678ex depth -0.58065ex
                  \kern -0.8em \intop}\nolimits_{\kern -0.45em#1}^{#2}}%
          {\mathop{\kern 0.1em\vrule width 0.5em height 0.69678ex depth -0.60387ex
                  \kern -0.6em \intop}\nolimits_{#1}^{#2}}%
          {\mathop{\kern 0.1em\vrule width 0.5em height 0.69678ex depth -0.60387ex
                  \kern -0.6em \intop}\nolimits_{#1}^{#2}}%
          {\mathop{\kern 0.1em\vrule width 0.5em height 0.69678ex depth -0.60387ex
                  \kern -0.6em \intop}\nolimits_{#1}^{#2}}}
\newcommand{\dist}{\operatorname{dist}}
\newcommand{\midrg}{\operatornamewithlimits{midrange}}
\newcommand{\diam}{\operatorname{diam}}
\numberwithin{equation}{section}
\definecolor{color1}{rgb}{0.309, 0.43,0.258}
\definecolor{color2}{rgb}{0.741, 0.502,0.743}
\definecolor{color3}{rgb}{0.580, 0.163,0.107}
\title[] 
      {local Lipschitz regularity for functions satisfying a time-dependent dynamic programming principle}
\author[Jeongmin Han]{}
\subjclass{Primary: 35K20; Secondary: 91A15.}
 \keywords{Parabolic $p$-Laplacian, Lipschitz continuity, Tug-of-war.}
 \email{hanjm9114@snu.ac.kr}
\begin{document}

\maketitle
%
%
%
%
\centerline{\scshape  Jeongmin Han$^*$}
\medskip
{\footnotesize
 \centerline{Department of Mathematical Sciences, Seoul National University,}
   \centerline{Seoul 08826, Republic of Korea}
} 

\bigskip


\begin{abstract}
We prove in this article that functions satisfying a dynamic programming principle have a local interior Lipschitz type regularity. This DPP is partly motivated by the connection to the normalized parabolic $p$-Laplace operator.
\end{abstract}

\section{Introduction}
In this paper, we study functions satisfying the following dynamic programming principle (DPP)
\begin{align}\label{dpp} \begin{split}
& \hspace{-0.2em} u_{\epsilon}(x,t) \\
& \hspace{-0.5em} = \frac{1}{2}\sup_{\nu \in S^{n-1}}  \hspace{-0.15em}\bigg\{ \alpha   u_{\epsilon} \bigg(\hspace{-0.25em}x+\epsilon \nu,t-\frac{\epsilon^2}{2} \bigg) \hspace{-0.15em}+ \hspace{-0.25em}\beta\hspace{-0.1em}  \kint_{B_{\epsilon}^{\nu} }  u_{\epsilon} \bigg(\hspace{-0.15em}x+h,t-\frac{\epsilon^2}{2} \bigg) d \mathcal{L}^{n-1}(h) \hspace{-0.15em} \bigg\} 
\\
& \hspace{-0.5em} + \frac{1}{2}\inf_{\nu \in S^{n-1}}  \hspace{-0.15em}\bigg\{ \alpha   u_{\epsilon} \bigg(\hspace{-0.25em}x+\epsilon \nu,t-\frac{\epsilon^2}{2} \bigg) \hspace{-0.15em}+ \hspace{-0.25em}\beta\hspace{-0.1em}  \kint_{B_{\epsilon}^{\nu} }  u_{\epsilon} \bigg(\hspace{-0.15em}x+h,t-\frac{\epsilon^2}{2} \bigg) d \mathcal{L}^{n-1}(h) \hspace{-0.15em} \bigg\}  \end{split}
\end{align}
for small $\epsilon > 0$.
Here, $\alpha, \beta$ are positive constants with $\alpha + \beta = 1$, $S^{n-1}$ is the $(n-1) $-dimensional unit sphere centered at the origin, $B_{\epsilon}^{\nu} $ is an $(n-1)$-dimensional $\epsilon$-ball which is centered at the origin and orthogonal to a unit vector $\nu$ and 
$$\kint_{A } u(h) d \mathcal{L}^{n-1}(h) = \frac{1}{|A|}\int_{A } u(h) d \mathcal{L}^{n-1}(h) , $$ where $|A|$ is the $(n-1)$-dimensional Lebesgue measure of a set $A$.
We will show interior (parabolic) Lipschitz type regularity for $u_{\epsilon}$ satisfying \eqref{dpp}, that is, 
$$ |u_{\epsilon}(x,t)-u_{\epsilon}(z,s)| \le C( |x-z|+ |t-s|^{\frac{1}{2}} + \epsilon) $$
for some constant $C>0$ and  any $(x,t), (z,s) $ in a parabolic cylinder of a given domain. 

The motivation to study this DPP partly stems from its connection to stochastic games.
On the other hand, our work is also linked to a normalized parabolic $p$-Laplace equation
\begin{equation}
\label{nple} 
\partial_{t} u = \Delta_{p}^{N} u = \Delta u + (p-2) \Delta_{\infty}^{N} u. 
\end{equation}
There have been many recent results regarding mean value characterizations for the $p$-Laplace type equations
(see, for example, \cite{MR2376662,MR2451291,MR2588596,MR2684311,MR2875296,MR2566554,MR3011990}).
We can formally justify that a solution of \eqref{nple} asymptotically satisfies \eqref{dpp} by using the Taylor expansion.

In \cite{MR3494400}, Parviainen and Ruosteenoja proved Lipschitz type regularity for functions satisfying a DPP related to the PDE \eqref{nple}, but they have a different DPP and it only covers the case $2 < p< \infty$.
They also showed H\"{o}lder type estimate for other DPP which is associated with the normalized parabolic $p(x,t)$-Laplace equation.
They used an analytic method in order to show the H\"{o}lder type regularity. 
Meanwhile, for the Lipschitz regularity when $p$ is constant, a core approach in the proof is based on game theory.
The aim of this paper is to extend regularity results in \cite{MR3494400} from the case $2 < p< \infty$ to the case $1 < p< \infty$. 
It is hard to apply the game theoretic argument in that paper to our DPP. 
Therefore here, we extend the proof of H\"{o}lder regularity results in \cite{MR3494400} to obtain the main result, Theorem \ref{mainthm}.

The proof of our main theorem is divided into two parts. 
In the first part, we provide an estimate for the function $u_{\epsilon}$ with respect to $t$.
To be more precise it shows a relation between the oscillation of $u_{\epsilon}$ in time direction and the oscillation in spatial direction. 
Next, we concentrate on proving regularity results with respect to $x$. 
We first obtain H\"{o}lder type estimate and then turn to Lipschitz estimate. 
Comparison arguments play a key role in the proof of the main theorem. 

As we mentioned earlier, our work is closely related to the $p$-Laplace type equations.
The DPP can be understood as a discretization of the related PDE.
Therefore, we can expect that key ideas in studying DPP would be useful in order to analyze the PDE.
On the other hand, our work is in close connection with game theory. 
One can understand the DPP \eqref{dpp} in the spirit of tug-of-war games. 
This interpretation is quite useful in that it allows us to see the problem from a different angle.
Actually, game theoretic arguments have played an important role in proving results in several previous studies.

The notion of a `harmonious function' was introduced in \cite{MR1617186,MR2346452}. 
A harmonious function $v$ satisfies the following DPP
\begin{align} \label{hsdpp} v(x)   =\frac{1}{2} \big\{ \sup_{y \in D} v(y)  +  \inf_{y \in D} v(y) \big\} , \end{align}
where $D$ is a fixed neighborhood of $x$.
In \cite{MR2449057}, some properties of harmonious functions were deduced by using tug-of-war games. 
A relation between the tug-of-war with noise and $p$-Laplace operator was shown in \cite{MR2451291}.
Moreover, similar connections for general fully nonlinear equations were covered in \cite{MR2588596}.
In \cite{MR2684311,MR2566554,MR2875296}, the authors derived asymptotic mean value characterizations for solutions to $p$-Laplace operators.  
The coincidence of game values of tug-of-war games and functions satisfying related DPPs as well as the existence and uniqueness of these functions were shown in \cite{MR3161602}.
Studies on DPPs and associated tug-of-war games are ongoing under various settings, for example in nonlocal and Heisenberg group setting, as in \cite{MR2471139, MR2868849, MR3177660}. 

Many regularity results are also known for functions defined through a DPP.  
In \cite{MR3011990}, a Lipschitz type estimate was proved for a DPP connected to the elliptic $p$-Laplace problem.
A local approach for the regularity was developed in \cite{MR3169768} (see also \cite{MR3441079}).
It is based on cancellation strategies which as an application give a new and straightforward proof for the Lipschitz continuity for the corresponding PDEs.
On the other hand, in \cite{MR3623556}, interior H\"{o}lder regularity was shown for a space-varying DPP based on the method in \cite{MR3846232}. 
Lipschitz regularity for this DPP was proved in \cite{alpr2018lipschitz}.

The paper is organized as follows. In the next section, some notations and background knowledge are presented. 
We prove the main theorem in the remaining sections. 
In Section 3, we establish the estimate for our function $u_{\epsilon}$ with respect to $t$. 
After that, regularity for $u_{\epsilon}$ in spatial direction is covered. 
We derive the H\"{o}lder regularity in Section 4 and the Lipschitz regularity in Section 5.

\section{Preliminaries}

Fix $ n \ge 2 $ and let $ \Omega \subset \mathbb{R}^{n} $ be a bounded domain. 
We consider a parabolic cylinder $\Omega_{T} := \Omega \times (0, T]$ for $T>0$ and its parabolic boundary $$ \partial_{p}\Omega_{T} = ( \partial \Omega  \times  [0,T] ) \cup ( \Omega \times \{ 0 \} ) .$$ 
Let $\epsilon > 0$ and define a parabolic $\epsilon$-strip of $\Omega_{T}$ as follows:
$$ \Gamma_{\epsilon, T} = \bigg( \Gamma_{\epsilon} \times \bigg( -\frac{\epsilon^{2}}{2},T  \bigg] \bigg)  \cup \bigg( \Omega \times \bigg( -\frac{\epsilon^{2}}{2},0 \bigg] \bigg) ,$$
where $\Gamma_{\epsilon} = \{ x \in \mathbb{R}^{n} \backslash 
 \Omega : \dist (x, \partial \Omega) \le \epsilon \}$ is an $\epsilon$-strip of $\Omega$.
Let $F$ be a given function defined in $\Gamma_{\epsilon, T} $.
\begin{definition}
Let  $\alpha, \beta \in (0,1)$ with $\alpha + \beta = 1$.
We say that a function $u_{\epsilon}$ satisfies the $\alpha$-parabolic DPP (with boundary data $F$) if
\eqref{dpp} holds in $\Omega_{T}$ and $u_{\epsilon}=F$ in $\Gamma_{\epsilon, T} $. 
\end{definition}
Here, we remark that if the boundary data $F$ is bounded, then one can show that $u_{\epsilon}$ is also bounded since $ ||u_{\epsilon}||_{L^{\infty}(\Omega_{ T})} \le ||F||_{L^{\infty}(\Gamma_{\epsilon,T})}$. (cf. \cite{MR3011990,MR3494400 })

We can heuristically interpret these functions in terms of  `time-dependent tug-of-war game with noise'. 
This game is a two player zero-sum game in $ \Omega_{T} $. 
The procedure of the game is as follows. When the game is started, a token is located at some point $ (x_{0}, t_{0}) \in \Omega_{T} $. First Player I and Player II choose some directions  $\nu_{I}, \nu_{II} $ in the $(n-1) $-dimensional unit sphere centered at the origin $S^{n-1}$, respectively. Next one tosses a fair coin and the winner of the toss moves the token.  With probability $\alpha$, the winner Player $i(\in \{ I, II \})$ moves the token to the point 
$ x_{1} = x_{0}+ \epsilon \nu_{i} \in B_{\epsilon}(x_{0}) $ and simultaneously the time changes by $ t_{1}=t_{0}- \epsilon^{2}/2$. 
On the other hand, with probability $\beta$, the token will be moved to the point $x_{1} $ where $x_{1}$ is randomly chosen from the uniformly probability distribution on the $(n-1)$-dimensional $\epsilon$-ball $B_{\epsilon}^{\nu_{i}}(x_{0})$ which is centered at $x_{0}$ and is orthogonal to $\nu_{i}$ and simultaneously the time also changes by $t_{1}=t_{0}-\epsilon^{2}/2$.
 If $(x_{1}, t_{1}) \in  \Gamma_{\epsilon, T} $, the game ends and Player II pays Player I the payoff $F(x_{1},t_{1})$. Otherwise, the above process is repeated and the token is moved to a point $ (x_{2}, t_{2}) \in B_{\epsilon}(x_{1})  \times \{  t_{1}  - \epsilon^{2}/2 \}$. The game ends when the token is located in the parabolic strip $\Gamma_{\epsilon, T}$ for the first time. Since $ t_{k} = t_{0} - k \epsilon^{2}/2 < 0 $ for sufficiently large $  k$, the game must terminate in finite time. 

Let $(x_{\tau}, t_{\tau}) $ be the end point of the game. We are concerned with the expectation of the payoff $ F(x_{\tau}, t_{\tau}) $. Player I tries to maximize $ F(x_{\tau}, t_{\tau}) $ and Player II tries to minimize that. 
The value function of Player I and II are defined as  
$$ u_{\epsilon}^{I}(x_{0},t_{0}) = \sup_{\mathcal{S}_{I}} \inf_{\mathcal{S}_{II}} \mathbb{E}_{\mathcal{S}_{I},\mathcal{S}_{II}}^{(x_{0},t_{0})}[F(x_{\tau},t_{\tau})]$$
and
$$ u_{\epsilon}^{II}(x_{0},t_{0}) = \inf_{\mathcal{S}_{II}}  \sup_{\mathcal{S}_{I}} \mathbb{E}_{\mathcal{S}_{I},\mathcal{S}_{II}}^{(x_{0},t_{0})}[F(x_{\tau},t_{\tau})] ,$$ where $ \mathcal{S}_{I}$ and $\mathcal{S}_{II} $ are strategies for Player I and Player II, respectively.

By the definition of the game, we can make a rough guess that $ u_{\epsilon}^{I}$ and $ u_{\epsilon}^{II}$ satisfy \eqref{dpp} since the value of these functions at every point would coincide the expectation value of it in the next turn. 
Although we will not show the relation between $ u_{\epsilon}^{I}$, $ u_{\epsilon}^{II}$ and $u_{\epsilon}$ in this paper, the above description of the game gives some intuition in the proof of our main result, Theorem \ref{mainthm}. 

We will use the notation $B_{\epsilon}^{\nu} $ and $S^{n-1}$ as in the previous section. 
Let $r$ be a fixed positive number. 
For $a>0$, set
$$ Q_{ar} = B_{ar}(0) \times (-ar^{2}, 0),$$ 
$$ Q_{ar, \epsilon} = B_{ar + \epsilon}(0) \times (-ar^{2}-\epsilon^{2}/2 , 0) $$
and
 $$ \Sigma_{a} = \{ (x,z,t,s) : x,z \in B_{ar}(0), -ar^{2}< t < 0 , |t-s|<\epsilon^{2}/2  \} $$
and we write $\Lambda_{t, \epsilon}$ for a $\epsilon$-time slice $$ \Lambda_{t, \epsilon} = B_{r+\epsilon}(0) \times (t-\epsilon^{2}/{2}, t] .$$
Furthermore, let
 $$ \midrg_{i \in I}A_{i}= \frac{1}{2} \bigg( \sup_{i \in I}A_{i} +\inf_{i \in I}A_{i} \bigg) $$ and
$$ \mathscr{A}u_{\epsilon} (x, \nu,t) = \alpha u_{\epsilon}(x+ \epsilon \nu,t) + \beta \kint_{B_{\epsilon}^{\nu} }u_{\epsilon}(x+ h,t) d \mathcal{L}^{n-1}(h) , $$
where $\nu \in S^{n-1} $ and $\kint_{A}$ means average of the integration on a set $A$.
Then we can rewrite (\ref{dpp}) by
\begin{align} \label{dppref} u_{\epsilon}(x,t)  = \midrg_{\nu \in S^{n-1}} \mathscr{A}u_{\epsilon} \bigg(x, \nu,t-\frac{\epsilon^2}{2} \bigg). \end{align}
We also define a set $ \mathbf{R}_{\nu}$ such that
$$ \mathbf{R}_{\nu} = \{ M \in \mathbf{O}(n) : Me_{1} = \nu \} ,$$
where $\mathbf{O}(n) $ is the orthogonal group in dimension $n$ and $e_{1} $ is the first vector in the standard orthonormal basis.
For simplicity, we abbreviate $$\sup_{\substack{\nu_{x},\nu_{z} \in  S^{n-1} \\ (P_{\nu_{x}}, P_{\nu_{z}}) \in \mathbf{R}_{\nu_{x}} \times \mathbf{R}_{\nu_{z}}} } $$ to
$$\sup_{\nu_{x},\nu_{z} \in  S^{n-1} } $$ throughout the paper.

\section{Regularity with respect to time}
First we investigate regularity for the function $u_{\epsilon}$ with respect to $t$. 
The aim of this section is to prove Lemma \ref{lem2} below. 
This lemma provides some information about a relation between the oscillation in a time slice and that in the whole cylinder. 

We use a comparison argument in the proof of the lemma.
We will first find an appropriate function $ \bar{v}$ ($\underline{v}$, respectively) which plays a similar role as a supersolution (subsolution, respectively) in PDE theory.
After that, we will deduce the desired result by estimating the difference of those functions.
The method used here is motivated by \cite[Lemma 4.3]{MR3660769}. Our proof may be regarded as a discrete version of this lemma. 

From now on, we fix $0<r<1$ and $T>0$. 
Since we only consider interior regularity, it is sufficient to show the regularity result in a cylinder $Q_{r}= B_{r}(0) \times (-r^{2}, 0)$ with proper translation.
We still use the notation $ \Omega_{T}$ after the translation.

\begin{lemma}\label{lem2}
Let $\bar{Q}_{2r} \subset \Omega_{T}$, $ -r^{2} < s < t < 0  $ and $u_{\epsilon}$ satisfies the $\alpha$-parabolic DPP with boundary data $F \in L^{\infty}(\Gamma_{\epsilon, T} )$ for given $0< \alpha<1$. Then, for given $\epsilon > 0$, $u_{\epsilon}$ satisfies the estimate
$$ |u_{\epsilon} (x,t) - u_{\epsilon} (x,s) | \le 18  \sup_{-r^{2} <\tau<0}  \osc_{\Lambda_{\tau, \epsilon}} u_{\epsilon} $$
for any $x \in B_{r}$.  
\end{lemma}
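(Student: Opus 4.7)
The plan is to prove this time-regularity estimate by a parabolic comparison argument, in the spirit of \cite{MR3660769}. I set $K := \sup_{-r^2 < \tau < 0} \osc_{\Lambda_{\tau, \epsilon}} u_\epsilon$, fix $x \in B_r$ and $-r^2 < s < t < 0$, and target $|u_\epsilon(x, t) - u_\epsilon(x, s)| \le 18 K$ by constructing explicit radial upper and lower barriers $\bar v$ and $\underline v$ in space-time, comparing them with $u_\epsilon$ through the DPP, and then evaluating at $(x, t)$.

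For the upper barrier I would try the quadratic-in-space, linear-in-time ansatz
\[
\bar v(y, \tau) = u_\epsilon(x, s) + C_1 K + C_2 K \frac{\tau - s}{r^2} + C_3 K \frac{|y - x|^2}{r^2},
\]
with positive constants $C_1, C_2, C_3$ to be chosen. Using $\kint_{B_\epsilon^\nu} h \, d\mathcal{L}^{n-1}(h) = 0$ and $\kint_{B_\epsilon^\nu} |h|^2 \, d\mathcal{L}^{n-1}(h) = \kappa_n \epsilon^2$ for a purely dimensional $\kappa_n$, the computation of $\mathscr{A} \bar v(y, \nu, \tau - \epsilon^2/2)$ shows that the only $\nu$-dependence is a term linear in $\nu \cdot (y - x)$, which cancels under $\midrg_\nu$, so the supersolution condition
\[
\bar v(y, \tau) \ge \midrg_{\nu \in S^{n-1}} \mathscr{A} \bar v\bigl(y, \nu, \tau - \tfrac{\epsilon^2}{2}\bigr)
\]
reduces to the algebraic requirement $C_2 \ge 2 C_3 (\alpha + \beta \kappa_n)$. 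The defining property of $K$ together with $(x, s) \in \Lambda_{s, \epsilon}$ yields $u_\epsilon(y, \tau) \le u_\epsilon(x, s) + K$ on the initial slab $\Lambda_{s, \epsilon}$, so picking $C_1$ large enough to absorb the small negative contribution from the time term when $\tau \in (s - \epsilon^2/2, s]$ ensures that $\bar v$ dominates $u_\epsilon$ there.

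Next I would propagate $\bar v \ge u_\epsilon$ forward in time through iterated application of the DPP: at any $(y, \tau) \in Q_{2r}$ with $\tau > s$, assuming the inequality on all relevant earlier-time points, the supersolution property together with the monotonicity of $\mathscr{A}$ and $\midrg_\nu$ yields $u_\epsilon(y, \tau) \le \bar v(y, \tau)$. Evaluating at $(x, t)$ and using $t - s < r^2$ gives $u_\epsilon(x, t) - u_\epsilon(x, s) \le (C_1 + C_2) K$, and a mirror argument with the lower barrier $\underline v$ produces the matching lower bound. A careful optimization of the constants $C_1, C_2, C_3$, together with the analogous constants from $\underline v$, should yield the prefactor $18$.

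The main obstacle is the mismatch between the spatial region reached by the DPP iteration and the spatial region on which the oscillation bound controls $u_\epsilon$. Since the DPP at a point $y$ references values in $B_\epsilon(y)$, iterating from $\Lambda_{s, \epsilon}$ forward up to time $t$ either forces the initial comparison ball to grow with the number of time steps $\sim (t-s)/\epsilon^2$, or requires the quadratic term $C_3 K |y-x|^2/r^2$ to dominate the crude $L^\infty$ bound $\|F\|_{L^{\infty}(\Gamma_{\epsilon,T})}$ at points outside the controlled region $B_{r+\epsilon}$. Calibrating the barrier so that it is quantitatively large where needed without inflating $(C_1 + C_2) K$ past $18 K$ --- and doing so uniformly in small $\epsilon$ --- is the delicate quantitative step of the argument.
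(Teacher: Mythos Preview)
Your framework is the right one and matches the paper's: a quadratic-in-space, linear-in-time barrier compared with $u_\epsilon$ through the DPP. Your supersolution computation is also correct. But the obstacle you flag at the end is exactly the place where your argument is currently incomplete, and the resolution is \emph{not} either of the two options you list. You do not need the quadratic term to dominate $\|F\|_{L^\infty}$, and you do not need to let the initial ball grow with the number of time steps.

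The missing idea is this: choose $C_3 = 2$. Then for any fixed time level $\tau$ and any $|y - x| \ge r$,
\[
\bar v(y,\tau) - \bar v(x,\tau) \;=\; C_3 K \frac{|y-x|^2}{r^2} \;\ge\; 2K,
\]
whereas the definition of $K$ gives $u_\epsilon(y,\tau) - u_\epsilon(x,\tau) \le K$ on the slice $\Lambda_{\tau,\epsilon}$. Hence $(u_\epsilon - \bar v)(y,\tau) \le (u_\epsilon - \bar v)(x,\tau) - K$, so on every $\epsilon$-time slice the supremum of $u_\epsilon - \bar v$ over $B_{r+\epsilon}$ is attained at a point with $|y-x| \le r$. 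This confines the (approximate) touching points to the spatial interior $\overline{B_r}$, where the DPP is available, and the comparison then propagates without any reference to $\|F\|_{L^\infty}$ or to points outside $B_{r+\epsilon}$. The contradiction argument (assume $M := \sup(u_\epsilon - \bar v) > 0$ beyond the initial slab, pick a near-maximizer, apply the DPP and the supersolution inequality) goes through cleanly once you know the near-maximizer sits in $\overline{B_r}$.

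This is precisely how the paper handles it: with $C_3 = 2$ and $C_2 = 7$ (so that the supersolution inequality holds with strict inequality, since $7/2 > 2$), the initial slab is taken at the bottom $\Lambda_{-r^2,\epsilon}$, and the paper actually proves the stronger statement $\osc_{Q_r} u_\epsilon \le 18 K$ by centering the barrier at the spatial origin rather than at a fixed $x$. The constant $18$ then comes from bounding $\bar c - \underline c \le 7K$ and $\sup_{Q_r}\bar v - \inf_{Q_r}\underline v \le (\bar c - \underline c) + 7K + 4K$.
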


\begin{proof} 
We set
$$ A = \sup_{-r^{2} <\tau<0}  \osc_{\Lambda_{\tau, \epsilon}} u_{\epsilon} $$ and
$$ \bar{v}_{c}(x,t) = c + 7 r^{-2} A t + 2 r^{-2} A |x|^{2}, $$
where $c \in \mathbb{R}$.
Define
$$ \bar{c} = \inf \{ c \in \mathbb{R} : \bar{v}_{c}  \ge u_{\epsilon} \  \textrm{in} \  \Lambda_{-r^{2}, \epsilon} \} $$ and we write $ \bar{v} = \bar{v}_{\bar{c}} $. 
Then for any $\eta >0$, we can always choose $ (x_{\eta}, t_{\eta}) \in \Lambda_{-r^{2}, \epsilon} $ so that $$ u_{\epsilon} (x_{\eta}, t_{\eta}) \ge \bar{v} (x_{\eta}, t_{\eta}) - \eta .$$ 
In this case, there would be some accumulation points $(\bar{x}, \bar{t}) \in \bar{\Lambda}_{-r^{2}, \epsilon} $ as $ \eta \to 0 $. Furthermore, $ \bar{x} $ must satisfy $ |\bar{x}| \le r $, since if not,
$$ 2A \le \bar{v}(x_{\eta}, t_{\eta}) - \bar{v}(0, t_{\eta}) \le u_{\epsilon}(x_{\eta}, t_{\eta}) - u_{\epsilon}(0, t_{\eta})+\eta \le A+\eta $$ for any $\eta > 0$, then it is a contradiction when $A > 0$.

Now we compare $  \midrg_{\nu \in S^{n-1}} \mathscr{A} \bar{v} (x, \nu,t-\epsilon^{2}/2 )  $ with $\bar{v}(x,t) $.
First, observe that
\begin{align*}& \midrg_{\nu \in S^{n-1}} \mathscr{A} \bar{v} \bigg(x, \nu,t-\frac{\epsilon^2}{2} \bigg)  \\ & \hspace{-0.15em} \le \alpha \midrg_{\nu \in S^{n-1}} \bar{v} \bigg(x\hspace{-0.15em}+\hspace{-0.15em} \epsilon \nu,t-\frac{\epsilon^2}{2} \bigg) \hspace{-0.15em} + \hspace{-0.15em} \beta \hspace{-0.35em} \sup_{\nu \in S^{n-1}} \hspace{-0.15em} \kint_{B_{\epsilon}^{ e_{1} } }\bar{v}\bigg(x+ P_{\nu} h,t-\frac{\epsilon^{2}}{2} \bigg) d \mathcal{L}^{n-1}(h) 
\end{align*} for some $P_{\nu} \in \mathbf{R}_{\nu} $. 
We see that
\begin{align*} \kint_{B_{\epsilon}^{e_{1}}} |x+ P_{\nu}h|^{2}  d\mathcal{L}^{n-1}(h)   &= \kint_{B_{\epsilon}^{\nu}} (|x|^{2} + 2 \langle x ,  P_{\nu} h \rangle + | P_{\nu}h|^{2} ) d\mathcal{L}^{n-1}(h)  \\ & \le |x|^{2} + \epsilon^{2} 
\end{align*} for any $ \nu \in S^{n-1} $. Next we need to show that
\begin{align*}  \midrg_{\nu \in S^{n-1}} |x+ \epsilon \nu |^{2}  \le |x|^{2}+\epsilon^{2}.
\end{align*} Observe that
\begin{align*} \sup_{\kappa \in B_{\epsilon}} |x+\kappa|^{2} &=\sup_{\nu \in S^{n-1}} \sup_{-\epsilon \le a \le \epsilon} |x+ a \nu|^{2} \\&=\sup_{\nu \in S^{n-1}} \sup_{-\epsilon \le a \le \epsilon} ( a^{2} + 2a\langle x, \nu \rangle + |x|^{2} ).
\end{align*} Since $a^{2} + 2a\langle x, \nu \rangle + |x|^{2} $ is convex in $a$, we observe that
$$ \sup_{-\epsilon \le a \le \epsilon} ( a^{2} + 2a\langle x, \nu \rangle + |x|^{2} ) = \epsilon^{2} + 2\epsilon |\langle x, \nu \rangle| + |x|^{2}.$$ We also see that there is a unit vector $ \mu $ 
so that $$\sup_{\nu \in S^{n-1}}( \epsilon^{2} + 2\epsilon |\langle x, \nu \rangle| + |x|^{2}) = |x+ \epsilon \mu |^{2}, $$ as $S^{n-1}$ is compact. Then we get 
\begin{align*} \midrg_{\nu \in S^{n-1}} |x+ \epsilon \nu |^{2}  \le \frac{1}{2}(|x+ \epsilon \mu |^{2}+|x- \epsilon \mu |^{2}) = |x|^{2}+ \epsilon^{2}.
\end{align*}
Therefore, we discover
\begin{align*}& \midrg_{\nu \in S^{n-1}} \mathscr{A} \bar{v} \bigg(x, \nu,t-\frac{\epsilon^2}{2} \bigg) \\
& \ \le \bar{c} + 7 r^{-2}A \bigg( t - \frac{\epsilon^{2}}{2} \bigg)+ 2r^{-2}A \{  \alpha(|x|^{2}+\epsilon^{2}) + \beta (|x|^{2}+\epsilon^{2}) \} \\
& \ \le  \bar{c} + 7r^{-2}A t + 2r^{-2} A |x|^{2} - \frac{3}{2} r^{-2} A \epsilon^{2} = \bar{v}(x,t) - \frac{3}{2} r^{-2} A \epsilon^{2} .
\end{align*}
Thus, 
\begin{align} \label{ie1}  \midrg_{\nu \in S^{n-1}}\mathscr{A} \bar{v} \bigg(x, \nu,t-\frac{\epsilon^2}{2} \bigg) \le \bar{v} (x,t) \end{align} for all $ (x, t) \in Q_{r} $.

Let $ M = \sup_{Q_{r, \epsilon} \backslash \Lambda_{-r^{2}, \epsilon}}(u_{\epsilon} - \bar{v})  $ and suppose $M >0$.  In this case, we see that $u_{\epsilon} \le \bar{v} + M $ in $Q_{r, \epsilon} $.
For any $\eta ' > 0$, we can choose a point   $ (x_{\eta '}, t_{\eta '}) \in Q_{r, \epsilon}  $ such that 
$$ u_{\epsilon}(x_{\eta'}, t_{\eta'}) > \bar{v}(x_{\eta'}, t_{\eta'}) + M - \eta' .$$
We have to show that $ (x_{\eta '}, t_{\eta '})$ must be in $\bar{Q}_{r} $ for any sufficiently small $\eta' > 0 $. 
By the definition of $M$, $t_{\eta'}> -r^{2}$. 
Note that we cannot assert this when $M \le 0$. 
On the other hand, for any $|x| \ge r $,
$$ \bar{v}(x, t)- \bar{v}(0, t) \ge 2A . $$
We also observe that $ u_{\epsilon}(x,t)-u_{\epsilon}(0,t)  \le A$. Hence it is always true that
$$ (u_{\epsilon}-\bar{v})(x,t) \le (u_{\epsilon}-\bar{v})(0,t) .$$
Thus, $(x_{\eta '}, t_{\eta '}) \in  \bar{Q}_{r}$. Then we obtain that
\begin{align*}  \midrg_{\nu \in S^{n-1}} \mathscr{A} \bigg\{ \bar{v} \bigg(    x_{\eta'}, \nu,t_{\eta'}-\frac{\epsilon^2}{2} \bigg) + M \bigg\} &  \ge  \midrg_{\nu \in S^{n-1}} \mathscr{A} u_{\epsilon} \bigg(x_{\eta'}, \nu,t_{\eta'}-\frac{\epsilon^2}{2} \bigg) \\
& = u_{\epsilon}(x_{\eta'},t_{\eta'}) \\
& > \bar{v}(x_{\eta'},t_{\eta'}) + M -\eta' .
\end{align*} In the first inequality, we have used that $ \bar{v}+M \ge u_{\epsilon} $ in $ Q_{r, \epsilon}$.
Therefore, \begin{align} \label{ie2}  \midrg_{\nu \in S^{n-1}} \mathscr{A} \bar{v} \bigg(x_{\eta'}, \nu,t_{\eta'}-\frac{\epsilon^2}{2} \bigg)  >\bar{v}(x_{\eta'},t_{\eta'}) -\eta' \end{align}
for any $\eta' > 0$. 
We combine (\ref{ie1}) with (\ref{ie2}) to discover that $A = 0$, and so $ \bar{v} = u_{\epsilon}= \bar{c} $.  
If $u_{\epsilon}$ is not a constant function, then we have a contradiction to $ A > 0 $. Hence $ M \le 0$ and therefore $u_{\epsilon} \le \bar{v}$ in $Q_{r, \epsilon} $.

On the other hand, consider 
$$  \underline{v}(x,t) = \underline{c} - 7 r^{-2} A t - 2 r^{-2} A |x|^{2},$$ where 
$$ \underline{c} = \sup \{ c \in \mathbb{R} : \underline{v}_{c}  \le u_{\epsilon} \  \textrm{in} \  \Lambda_{-r^{2}, \epsilon} \}. $$  
Following the above procedure, we can show that $u_{\epsilon} \ge \underline{v}$ in $Q_{r, \epsilon} $. For arbitrary $\eta>0$, we can choose  $(\bar{x}_{\eta}, \bar{t}_{\eta}), (\underline{x}_{\eta}, \underline{t}_{\eta}) \in \bar{\Lambda}_{-r^{2}, \epsilon}$ such that
$$u_{\epsilon} (\bar{x}_{\eta}, \bar{t}_{\eta}) \ge \bar{v} (\bar{x}_{\eta}, \bar{t}_{\eta}) - \eta $$ and $$ u_{\epsilon} (\underline{x}_{\eta}, \underline{t}_{\eta}) \le \bar{v} (\underline{x}_{\eta}, \underline{t}_{\eta}) +\eta.$$
Then 
$$ \bar{v}(\bar{x}_{\eta}, \bar{t}_{\eta})-\underline{v}(\underline{x}_{\eta}, \underline{t}_{\eta}) \le \osc_{\Lambda_{t, \epsilon}} u_{\epsilon} + 2 \eta , $$
and hence
$$ \bar{c}- \underline{c} \le 3A+ \frac{7}{2}r^{-2}A \epsilon^{2} \le 7A. $$
Therefore, we obtain
$$ \osc_{Q_{r}}u_{\epsilon} \le \sup_{Q_{r}} \bar{v}- \inf_{Q_{r}} \underline{v} \le \bar{c} - \underline{c}+7A+4A \le 18A. $$ 
This completes the proof.
\end{proof}

\begin{remark} \label{remark} We showed in the proof of Lemma \ref{lem2} that the oscillation of $u_{\epsilon}$ in time direction is uniformly estimated by the oscillation of $u_{\epsilon}$ in spatial direction on $(\epsilon^{2}/2)$-time slices. 
Note that an $(\epsilon^{2}/2)$-time slice $\Lambda_{t, \epsilon}$ shrinks to $B_{r} \times \{ t \}$ as $\epsilon \to 0$ for any $t$.
Thus, we can see that regularity for $u_{\epsilon}$ with respect to $t$ almost depends on the regularity with respect to $x$ provided $\epsilon$ is small enough. 
\end{remark}

\section{H\"{o}lder regularity}
The aim of this section is to show that $u_{\epsilon}$ satisfies H\"{o}lder type regularity.
This result will be essentially used to prove Lipschitz regularity with respect to $x$ in the next section.

We will use a comparison argument arising from game interpretations for obtaining regularity results in spatial direction. 
This argument plays an important role in obtaining the desired estimate. 
Several regularity results for functions satisfying various time-independent DPPs were proved by calculations based on this argument (see \cite{MR3846232,MR3623556,alpr2018lipschitz}).
It was proved in \cite{MR3494400} that functions satisfying another time-dependent DPP have H\"{o}lder regularity.
Our proof differs from that in \cite{MR3494400} due to the difference of the setting of DPP.

Our argument depends on the distance between two points. 
If two points are relatively far away, we will consider `multidimensional DPP'(For a more detailed explanation, see \cite{MR3846232}).
We divide the argument into two subcases. 
For each case, we will get the desired estimate by choosing proper behavior of an auxiliary function.
In addition, we can derive our estimate by direct calculation when two points are close enough.

\begin{lemma} \label{lem1}
Let $\bar{B}_{2r}(0) \times [-2r^{2}-\epsilon^{2} / 2, \epsilon^{2}/2] \subset \Omega_{T}$, $ 0< \alpha < 1 $ and $ \epsilon > 0 $ is small. Suppose that $u_{\epsilon}$ satisfies the $\alpha$-parabolic DPP with boundary data $F \in L^{\infty}(\Gamma_{\epsilon, T} )$.
Then for any $0 < \delta < 1 $,
$$ |u_{\epsilon} (x,t) - u_{\epsilon} (z,s) | \le C ||u_{\epsilon}||_{\infty} ( |x-z|^{\delta} + \epsilon^{\delta}) ,$$ whenever $x, z \in B_{r}(0)$, $ -r^{2}<t<0$, $ |t-s| < \epsilon^{2} / 2 $ and $C>0$ is a constant which only depends on $r, \delta, \alpha$ and $n$.
\end{lemma}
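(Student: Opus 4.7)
The plan is to establish the bound by a comparison argument with an auxiliary function on the product cylinder $\Sigma_a$. The key algebraic ingredient is the ``multidimensional DPP'' decoupling
$$\midrg_{\nu}F(\nu) - \midrg_{\nu}G(\nu) = \midrg_{\nu_x,\nu_z}\bigl[F(\nu_x) - G(\nu_z)\bigr],$$
which follows from $\sup F - \inf G = \sup_{\nu_x,\nu_z}[F(\nu_x)-G(\nu_z)]$ together with its dual obtained by swapping $\sup \leftrightarrow \inf$; this identity lets me replace a difference of coupled midranges by a joint midrange over independent $\nu_x, \nu_z$, which is where the strict concavity of $r\mapsto r^\delta$ can be brought to bear. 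I would take
$$\Psi(x,z,t,s) = C_1\|u_\epsilon\|_\infty\bigl(|x-z|^\delta + \epsilon^\delta\bigr) + C_2(|x|^2+|z|^2),$$
with $C_1, C_2$ to be chosen, and suppose by contradiction that $\Theta := \sup_{\Sigma_a}(u_\epsilon(x,t)-u_\epsilon(z,s)-\Psi(x,z,t,s)) > 0$. The quadratic barrier forces any $\eta$-near-maximizer $(x_*,z_*,t_*,s_*)$ to lie in the spatial interior of $\Sigma_a$, while the narrow time range $|t-s|<\epsilon^2/2$ together with Lemma \ref{lem2} controls the time dependence.

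The argument then splits on $|x_*-z_*|$ relative to $\epsilon$. In the regime $|x_*-z_*| \le K\epsilon$ with $K$ large, the trivial bound $|u_\epsilon(x,t)-u_\epsilon(z,s)| \le 2\|u_\epsilon\|_\infty$ combined with the $\epsilon^\delta$ term in $\Psi$ (and Lemma \ref{lem2} for the time direction) already rules out $\Theta > 0$ once $C_1$ is taken large enough. In the substantive regime $|x_*-z_*| \ge K\epsilon$, I apply the DPP \eqref{dppref} to both $u_\epsilon(x_*,t_*)$ and $u_\epsilon(z_*,s_*)$, use the decoupling identity, and substitute the near-max bound $u_\epsilon(y,\tau)-u_\epsilon(w,\sigma) \le \Theta + \Psi(y,w,\tau,\sigma)$ at all shifted arguments. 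The contradiction then reduces to proving the discrete supersolution inequality
$$\midrg_{\nu_x,\nu_z}\Bigl\{\alpha\Psi(x_*+\epsilon\nu_x, z_*+\epsilon\nu_z,\cdot) + \beta\intav_{B_\epsilon^{e_1}}\Psi(x_*+P_{\nu_x}h, z_*+P_{\nu_z}h,\cdot)\,d\mathcal{L}^{n-1}(h)\Bigr\} < \Psi(x_*,z_*,t_*,s_*),$$
where the outer midrange is taken jointly over $\nu_x, \nu_z \in S^{n-1}$ and the coupled rotations $P_{\nu_x}\in\mathbf{R}_{\nu_x}$, $P_{\nu_z}\in\mathbf{R}_{\nu_z}$.

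The central computation is for the $|x-z|^\delta$ term of $\Psi$. Writing $v=x_*-z_*$ and $r=|v|\ge K\epsilon$, the $\alpha$-part $|v+\epsilon(\nu_x-\nu_z)|^\delta$ attains joint sup $(r+2\epsilon)^\delta$ (at $\nu_x-\nu_z = 2v/|v|$) and inf $(r-2\epsilon)^\delta$, so by strict concavity of $r\mapsto r^\delta$ for $\delta<1$ and Taylor expansion its midrange equals $r^\delta - c\,\delta(1-\delta)\,\epsilon^2 r^{\delta-2} + O(\epsilon^4 r^{\delta-4})$, a strict decrease of order $\epsilon^2 r^{\delta-2}$. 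The $\beta$-part $\intav|v+(P_{\nu_x}-P_{\nu_z})h|^\delta\,d\mathcal{L}^{n-1}(h)$ is analyzed similarly: the joint midrange over paired rotations, using the isotropy of the $(n-1)$-ball and a Jensen-type estimate, contributes either a further negative correction or at worst a benign $O(\epsilon^2)$ term that is $r$-independent. Combined with the positive $O(\epsilon^2)$ contribution from $C_2(|x|^2+|z|^2)$, the concavity gain $c\,C_1\,\epsilon^2 r^{\delta-2}$ dominates on the range $r\in[K\epsilon, 2r_0]$ once $C_1$ is chosen large relative to $C_2$, yielding the strict inequality and hence the contradiction. The main obstacle will be the clean $\beta$-part computation --- controlling the coupled-rotation midrange of a concave function over an $(n-1)$-dimensional ball, and ensuring the gain survives uniformly in $r$ and in the adversarial sup --- and it is precisely here that the strictness $\delta<1$ is essential. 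The time direction of the estimate then follows a posteriori from Lemma \ref{lem2}.
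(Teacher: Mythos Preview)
Your overall framework---comparison function on $\Sigma_a$, contradiction at a near-maximizer, reduction to a discrete supersolution inequality---matches the paper's, and the decoupling identity for the midrange is correct. But the argument has a real gap in each regime.

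In the far case $|x_*-z_*|\ge K\epsilon$ you compute the midrange of the $\alpha$-part and of the $\beta$-part separately and then add. This is where it breaks: $\midrg_{\nu_x,\nu_z}(\alpha A+\beta B)$ is \emph{not} $\alpha\,\midrg A+\beta\,\midrg B$, because the sup of the combined functional need not be attained at the radial choice $\nu_x-\nu_z=2\mathbf v$ that maximizes $A$. The quadratic barrier contributes a first-order term of size $O(\epsilon)$ that competes with the first-order term of $|x-z|^\delta$ and can pull the true maximizer off the radial direction; at such a non-radial maximizer your second-order concavity gain $c\,\epsilon^2|x-z|^{\delta-2}$ simply is not available. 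The paper deals with this by taking the actual near-maximizer $(\nu_x,\nu_z)$ of $Tf$ and case-splitting on whether $(\nu_x-\nu_z)_V^2\ge 4-\Theta$: in the nearly-radial case one pairs with $(-\nu_x,-\nu_z)$ and uses concavity as you suggest, but in the off-radial case one pairs instead with $(-\mathbf v,\mathbf v)$ and extracts a \emph{first-order} gain $\tfrac12 C\delta|x-z|^{\delta-1}\{(\nu_x-\nu_z)_V-2\}\epsilon<0$, which dominates everything else. Your sketch has no mechanism for this second scenario, and ``the joint midrange over paired rotations \dots\ contributes at worst a benign $O(\epsilon^2)$'' is precisely the unproven assertion.

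The close case $|x_*-z_*|\le K\epsilon$ also fails as written. The trivial bound $2\|u_\epsilon\|_\infty$ does \emph{not} rule out $\Theta>0$: that would require $C_1\|u_\epsilon\|_\infty\epsilon^\delta\ge 2\|u_\epsilon\|_\infty$, i.e.\ $\epsilon^\delta\ge 2/C_1$, but $C_1$ must be chosen independently of $\epsilon$. Nor does the supersolution inequality hold for your $\Psi$ at such points, since one $\alpha$-step can increase $|x-z|^\delta$ by order $\epsilon^\delta$. The paper repairs this by subtracting an additional discrete term $f_2$, a step function on the annuli $A_i=\{(i-1)\epsilon/10<|x-z|\le i\epsilon/10\}$ with values $C^{2(N-i)}\epsilon^\delta$; its geometric decay in $i$ lets a specific inf-side choice drop $f_2$ by a factor $C^2$, overwhelming the $O(C\epsilon^\delta)$ variation of $f_1$ and forcing the inequality. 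Without such a term your comparison function cannot close the argument near the diagonal.
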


\begin{proof}
First, we can assume that $ ||u_{\epsilon}||_{\infty} \le r^{\delta} $ by scaling.
Let us construct an auxiliary function. 
Define \begin{align} \label{f1} f_{1}(x,z) = C | x - z |^{\delta} + M | x+ z|^{2} ,\end{align} 
\begin{equation}   \label{f2} f_{2}(x, z) = \left\{ \begin{array}{ll}
C^{2(N-i)} \epsilon ^ {\delta}  & \textrm{if $(x, z) \in A_{i}$}\\
0 & \textrm{if $|x-z|>N \epsilon / 10 $}
\end{array} \right.
\end{equation} and
\begin{align} \label{gg} g(t, s) = \max \{ M ( |t- r^{2}|^ { {\delta}/2} - r^{\delta}),  M ( |s- r^{2}|^ {{\delta}/2} - r^{\delta})   \}\end{align}
 where  $N = N( r, \delta, \alpha,n) \in \mathbb N $,  $ C = C(r, \delta, \alpha, n) >1$ and $M = M(r) > 1$ are constants to be determined, and $$A_{i} = \{ (x , z) \in \mathbb R ^ {2n} : (i-1) \epsilon / 10 < |x-z| \le i \epsilon / 10 \} $$ for $ i = 0, 1, ... , N$.  

Now we define \begin{align} \label{comfn} H(x, z, t, s) = f_{1}(x,z) - f_{2}(x,z) + g(t, s) . \end{align}

We first show that 
$$ |u_{\epsilon}(x,t) - u_{\epsilon}(z,s) | \le  C ( |x-z|^ {\delta}+ \epsilon^ {\delta} ) $$ for every $ x,z \ (x \neq z) \in B_{2r}  (0)$, $ -2r^{2}<t<0$ and $ |t-s| < \epsilon^{2} / 2 $.
To this end, choose $M$ sufficiently large so that $$ u_{\epsilon}(x,t) - u_{\epsilon}(z,s) - H(x,z,t,s) \le  C ^ {2N} \epsilon ^ {\delta}+  C \epsilon^ {\delta} \qquad  \textrm{in} \ \ \ \Sigma_{2} \backslash \Sigma_{1}  .  $$ 
So, if we prove that
\begin{align}  \label{kees}  u_{\epsilon}(x,t) - u_{\epsilon}(z,s) - H (x,z,t,s) \le  C ^ {2N} \epsilon^ {\delta} +C \epsilon^ {\delta}  \qquad  \textrm{in} \ \ \ \Sigma_{1} \backslash \Upsilon
\end{align}
where $ \Upsilon = \{ (x,z,t,s) \in \mathbb{R}^{2n} \times \mathbb{R}^{2} : x   =   z, \ -r^{2}< t < 0 , \ |t-s|<\epsilon^{2}/2 \}$,
then it is shown that Lemma \ref{lem1} holds in $ \Sigma_{2} \backslash \Upsilon $.
Since we can obtain this estimate for $ u_{\epsilon}(z,s) - u_{\epsilon}(x,t) $, we have
$$ | u_{\epsilon}(x,t) - u_{\epsilon}(z,s) | \le  C ^ {2N} \epsilon^ {\delta} +C \epsilon^ {\delta}  + H (x,z,t,s) \qquad  \textrm{in} \ \ \ \Sigma_{2} \backslash \Upsilon .$$
Now we can assume that $z= -x $ by proper scaling and transformation,
and then we get
$$ |u_{\epsilon}(x,t) - u_{\epsilon}(-x,s) | \le     C|x|^{\delta}+  C' \epsilon^ {\delta} $$
for some universal constant $C'>0$.
It gives the result of Lemma \ref{lem1}.

Suppose  that \eqref{kees} is not true.  Then \begin{align} \label{ca1} K := \sup_{(x,z,t,s) \in \Sigma_{1} \backslash \Upsilon } (u_{\epsilon}(x,t) - u_{\epsilon}(z,s) - H(x,z,t,s)) > C ^ {2N} \epsilon ^{\delta}+C \epsilon ^{\delta}  . \end{align}
Let $\eta >0$. We can choose $(x', z', t', s') \in \Sigma_{1} \backslash \Upsilon $ such that 
$$ u_{\epsilon}(x',t') - u_{\epsilon}(z',s') -H(x',z',t',s') \ge K - \eta . $$

Recall the DPP (\ref{dppref}).
Using this together with the previous inequality, we know that
\begin{align*} K & \le  u_{\epsilon}(x',t') - u_{\epsilon}(z',s') -H(x',z',t',s') + \eta
\\ &
 \\ & \le \frac{1}{2} \bigg[ \sup_{\nu_{x'},\nu_{z'} \in S^{n-1}} \bigg\{ \mathscr{A}u_{\epsilon} \bigg( x', \nu_{x'}, t'- \frac{\epsilon^{2}}{2} \bigg) - \mathscr{A}u_{\epsilon} \bigg( z', \nu_{z'}, s'- \frac{\epsilon^{2}}{2} \bigg) \bigg\} \\ & \qquad + \inf_{\nu_{x'},\nu_{z'} \in S^{n-1}} \bigg\{ \mathscr{A}u_{\epsilon} \bigg( x', \nu_{x'}, t'- \frac{\epsilon^{2}}{2} \bigg) - \mathscr{A}u_{\epsilon} \bigg( z', \nu_{z'}, s'- \frac{\epsilon^{2}}{2} \bigg) \bigg\}  \bigg]  \\  
& \qquad - H(x', z', t', s') + 2\eta. \end{align*}

Let 
$$ \mathbf{[I]} = \frac{1}{2}  \sup_{\nu_{x'},\nu_{z'} \in S^{n-1}} \bigg\{ \mathscr{A}u_{\epsilon} \bigg( x', \nu_{x'}, t'- \frac{\epsilon^{2}}{2} \bigg) - \mathscr{A}u_{\epsilon} \bigg( z', \nu_{z'}, s'- \frac{\epsilon^{2}}{2} \bigg) \bigg\}$$
and
$$ \mathbf{[II]} = \frac{1}{2}  \inf_{\nu_{x'},\nu_{z'} \in S^{n-1}} \bigg\{ \mathscr{A}u_{\epsilon} \bigg( x', \nu_{x'}, t'- \frac{\epsilon^{2}}{2} \bigg) - \mathscr{A}u_{\epsilon} \bigg( z', \nu_{z'}, s'- \frac{\epsilon^{2}}{2} \bigg) \bigg\}.$$

We see that
\begin{align*}\  u_{\epsilon}&(x',t')  - u_{\epsilon}(z',s') \\ & =  \midrg_{\nu_{x'} \in S^{n-1}} \mathscr{A}u_{\epsilon} \bigg(x', \nu_{x'},t'-\frac{\epsilon^2}{2} \bigg)   -  \midrg_{\nu_{z'} \in S^{n-1}} \mathscr{A}u_{\epsilon} \bigg(z', \nu_{z'},s'-\frac{\epsilon^2}{2} \bigg)  \\ &  \le  \mathbf{[I]} +  \mathbf{[II]} + \eta .
\end{align*} 

By the definition of $\mathscr{A}$, we see that
\begin{align*}
 & \mathbf{[I]} =  \frac{1}{2}  \sup_{\nu_{x'},\nu_{z'} \in S^{n-1}} \bigg[ \alpha \bigg\{ u_{\epsilon} \bigg( x+\epsilon \nu_{x'}, t'- \frac{\epsilon^{2}}{2} \bigg) - u_{\epsilon} \bigg( z'+ \epsilon \nu_{z'},s'- \frac{\epsilon^{2}}{2} \bigg) \bigg\} \\
& + \beta \kint_{B_{\epsilon}^{\nu_{x'}} } \bigg\{ u_{\epsilon}\bigg(x'+ P_{\nu_{x'}} h,t'-\frac{\epsilon^{2}}{2} \bigg) - u_{\epsilon}\bigg(z'+ P_{\nu_{z'}} h,s'-\frac{\epsilon^{2}}{2} \bigg)  \bigg\} d \mathcal{L}^{n-1}(h)   \bigg].
\end{align*}

Now we estimate $  \mathbf{[I]}$(and $  \mathbf{[II]}$) by $H$-related terms. Let
$$ \mathbf{[III]} = \alpha H(x+ \epsilon\nu_{x},z+ \epsilon \nu_{z}, t,s) + \beta \kint_{B_{\epsilon}^{e_{1}} }H(x+ P_{\nu_{x}}h,z+P_{\nu_{z}} h,t,s) d \mathcal{L}^{n-1}(h) . $$
Recall $ f(x,z)=f_{1}(x,z)- f_{2}(x,z) $ and $ H(x,z,t,s)=f(x,z)+g(t,s) $. Then we see that
$$H(x+ \epsilon\nu_{x},z+ \epsilon \nu_{z}, t,s)=   f(x+ \epsilon\nu_{x},z+ \epsilon \nu_{z}) + g(t,s)$$
and \begin{align*}  \kint_{B_{\epsilon}^{e_{1}} }&H(x+ P_{\nu_{x}}h,z+P_{\nu_{z}} h,t,s) d\mathcal{L}^{n-1}(h) \\ & =\kint_{B_{\epsilon}^{e_{1}} } \big\{ f(x+ P_{\nu_{x}}h,z+P_{\nu_{z}} h) + g(t,s) \big\} d \mathcal{L}^{n-1}(h)
\\ & = \kint_{B_{\epsilon}^{e_{1}} }f(x+ P_{\nu_{x}}h,z+P_{\nu_{z}} h) d \mathcal{L}^{n-1}(h) + g(t,s). 
 \end{align*}
Then we can write $ \mathbf{[III]}$ as
$$  \alpha f(x+ \epsilon\nu_{x},z+ \epsilon \nu_{z}) + \beta \kint_{B_{\epsilon}^{e_{1}} }f(x+ P_{\nu_{x}}h,z+P_{\nu_{z}} h) d \mathcal{L}^{n-1}(h) + g(t,s) . $$
Here we define an operator $T$  as
\begin{align*} Tf&( x, z, P_{\nu_{x}} , P_{\nu_{z}}) \\ & = \alpha f(x+ \epsilon \nu_{x},z+ \epsilon \nu_{z}) + \beta \kint_{B_{\epsilon}^{e_{1}} }f(x+P_{\nu_{x}} h,z+P_{\nu_{z}} h) d \mathcal{L}^{n-1}(h).\end{align*}

Since \begin{align} \label{ineq} u_{\epsilon}(y,t) - u_{\epsilon}(\tilde{y},\tilde{t}) \le K + H(y,\tilde{y},t,\tilde{t}) = K + f(y, \tilde{y}) + g(t,\tilde{t}) \end{align} by the definition of $K$, we obtain that
\begin{align*} \mathbf{[I]}& \le \frac{1}{2} \sup_{\nu_{x'},\nu_{z'} \in  S^{n-1} } \bigg[ \alpha \bigg\{ K+ H \bigg( x'+ \epsilon \nu_{x'}, z'+ \epsilon \nu_{z'}, t'-\frac{\epsilon^{2}}{2}, s'-\frac{\epsilon^{2}}{2} \bigg) \bigg\}
\\ &  + \beta \kint_{B_{\epsilon}^{e_{1}} }  \bigg\{ K +  H \bigg( x'+ P_{\nu_{x'}}h, z'+ P_{\nu_{z'}}h, t'-\frac{\epsilon^{2}}{2}, s'-\frac{\epsilon^{2}}{2} \bigg)   \bigg\} d \mathcal{L}^{n-1}(h) \bigg]
\\ & \le \frac{1}{2} \bigg[ K +\sup_{\nu_{x'},\nu_{z'} \in  S^{n-1} } Tf ( x', z',P_{\nu_{x'}} , P_{\nu_{z'}}  ) + g\bigg( t'-\frac{\epsilon^{2}}{2}, s'-\frac{\epsilon^{2}}{2} \bigg) \bigg]. \end{align*}

Next we have to estimate $ \mathbf{[II]} $. Choose $ \rho_{x'}, \rho_{z'} \in  S^{n-1} $ so that
$$\inf_{\nu_{x'},\nu_{z'} \in  S^{n-1} } Tf ( x', z',P_{\nu_{x'}} ,P_{\nu_{z'}}  ) \ge  Tf (  x', z',P_{\rho_{x'}} , P_{\rho_{z'}}  ) - 2\eta . $$
Then we calculate that
\begin{align*} &  \mathbf{[II]} \le  \frac{1}{2} \bigg[ \alpha \bigg\{ u_{\epsilon} \bigg( x+\epsilon \rho_{x'}, t'- \frac{\epsilon^{2}}{2} \bigg) - u_{\epsilon} \bigg( z'+ \epsilon \rho_{z'}, t'- \frac{\epsilon^{2}}{2} \bigg) \bigg\} \\
& + \beta \kint_{B_{\epsilon}^{e_{1}} } \bigg\{ u_{\epsilon}\bigg(x'+ P_{\rho_{x'}} h,t'-\frac{\epsilon^{2}}{2} \bigg) - u_{\epsilon}\bigg(z'+ P_{\rho_{z'}} h,t'-\frac{\epsilon^{2}}{2} \bigg)  \bigg\} d \mathcal{L}^{n-1}(h)   \bigg]
\\ & \le \frac{1}{2} \bigg[ K +  \alpha f(x'+ \epsilon\rho_{x'},z'+ \epsilon \rho_{z'}) \\ & \qquad  + \beta \kint_{B_{\epsilon}^{e_{1}} }f(x'+ P_{\rho_{x'}}h,z'+P_{\rho_{z'}} h) d \mathcal{L}^{n-1}(h)+g\bigg( t'-\frac{\epsilon^{2}}{2}, s'-\frac{\epsilon^{2}}{2} \bigg)\bigg]
 \\& \le \frac{1}{2} \bigg[ K + Tf(x', z',P_{\rho_{x'}} , P_{\rho_{z'}}  ) +g\bigg( t'-\frac{\epsilon^{2}}{2}, s'-\frac{\epsilon^{2}}{2} \bigg) \bigg] \\ & \le
  \frac{1}{2} \bigg[ K +\inf_{\nu_{x'},\nu_{z'} \in  S^{n-1} }  Tf (  x', z',P_{\nu_{x'}} , P_{\nu_{z'}} ) + g\bigg( t'-\frac{\epsilon^{2}}{2}, s'-\frac{\epsilon^{2}}{2} \bigg)  \bigg] +\eta 
 . 
\end{align*}
We used (\ref{ineq}) again in the second inequality.

Combining the estimate for $  \mathbf{[I]}$ and  $  \mathbf{[II]}$, we obtain
\begin{align*} K  \le & \  u_{\epsilon}(x',t') - u_{\epsilon}(z',s') - H(x',z',t',s') + \eta \\  \le & \ K +\midrg_{\nu_{x'},\nu_{z'} \in  S^{n-1} }  Tf (   x', z,P_{\nu_{x'}} , P_{\nu_{z'}} ) +g\bigg( t'-\frac{\epsilon^{2}}{2}, s'-\frac{\epsilon^{2}}{2} \bigg)  \\ &
\ \ \ -H(x', z', t', s') + 2\eta . \end{align*}
Since $\eta$ is arbitrarily chosen, if we show that
\begin{align*}\midrg_{\nu_{x'},\nu_{z'} \in  S^{n-1} } Tf (  x', z',P_{\nu_{x'}} , P_{\nu_{z'}}) + g\bigg( t'-\frac{\epsilon^{2}}{2}, s'-\frac{\epsilon^{2}}{2} \bigg)   < H(x', z', t', s') ,\end{align*}
that is, \begin{align} \begin{split} \label{eq:hfirst} \midrg_{\nu_{x'},\nu_{z'} \in  S^{n-1} } Tf (  x', z',& P_{\nu_{x'}} , P_{\nu_{z'}} )  - f(x', z') \\ & < g(t',s') - g\bigg( t'-\frac{\epsilon^{2}}{2}, s'-\frac{\epsilon^{2}}{2} \bigg) ,   \end{split}
\end{align}
then the proof is completed. 

Now we need to estimate \eqref{eq:hfirst}. Without loss of generality, we assume that $t' \ge s'$. Then we see that
\begin{align*}g(t',s') - & g\bigg( t'-\frac{\epsilon^{2}}{2}, s'-\frac{\epsilon^{2}}{2} \bigg) \\ &= M ( |s'- r^{2}|^ {{\delta}/2} - r^{\delta}) - M \bigg( \bigg|s'-\frac{\epsilon^{2}}{2}- r^{2}\bigg|^ {{\delta}/2} - r^{\delta} \bigg)   
\\ & = M  |s'- r^{2}|^ {{\delta}/2} -M \bigg|s'-\frac{\epsilon^{2}}{2}- r^{2}\bigg|^ {{\delta}/2} .
\end{align*}
Note that 
$$  M  |s'- r^{2}|^ {{\delta}/2} -M \bigg|s'-\frac{\epsilon^{2}}{2}- r^{2}\bigg|^ {{\delta}/2} \ge M \bigg\{ r^{\delta} - \bigg( r^{2} + \frac{\epsilon^{2}}{2} \bigg)^{\frac{\delta}{2}} \bigg\} $$ 
and
$$ \bigg( r^{2} + \frac{\epsilon^{2}}{2} \bigg)^{\frac{\delta}{2}} \le  r^{\delta} +  \bigg( \frac{\epsilon^{2}}{2} \bigg)^{\frac{\delta}{2}} \le r^{\delta} + \epsilon^{\delta}$$ for $0 <\delta \le 1$. We also deduce that
\begin{align*}M  |s'- r^{2}|^ {{\delta}/2} -M \bigg|s'-\frac{\epsilon^{2}}{2}- r^{2}\bigg|^ {{\delta}/2} \ge -M \frac{\delta}{2} | s' - r^{2}|^{\frac{\delta}{2}-1} \frac{\epsilon^{2}}{2}
\ge -M r^{\frac{\delta}{2}-1} \epsilon^{2} ,
\end{align*}
 since $h(t)=|t|^{\delta / 2}$ is concave.

Therefore, we see that $$  g(t',s') - g\big( t'-\frac{\epsilon^{2}}{2}, s'-\frac{\epsilon^{2}}{2} \big)   \ge \min \{  -M \epsilon^{\delta} , -M \tilde{C}(r) \epsilon^{2} \} =: \sigma .$$

To establish (\ref{eq:hfirst}), we will distinguish several cases. And from now on, we will write $ (x, z, t, s) $ instead of $ (x', z', t', s')$ in our calculations for convenience.

\subsection{Case \texorpdfstring{$|x-z| > N \epsilon / 10$}.}

In this case, $f(x,z)=f_{1}(x,z)$ as $ f_{2}(x,z)=0 $. 
Thus we can write (\ref{eq:hfirst}) as 
 \begin{align} \label{eq:hsecond} \midrg_{\nu_{x},\nu_{z} \in  S^{n-1} }  T f_{1}(  x, z, P_{\nu_{x}} , P_{\nu_{z}} ) - f_{1}(x, z)  < \sigma   .  \end{align}
For any $\eta > 0$, we can choose some vectors $\nu_{x}, \nu_{z} \in S^{n-1}  $ and related rotations $P_{\nu_{x}} \in \mathbf{R}_{\nu_{x}}$, $P_{\nu_{z}} \in \mathbf{R}_{\nu_{z}}$ so that \begin{align*}\sup_{h_{x},h_{z} \in  S^{n-1} }  T f_{1} (   x, z, P_{h_{x}}, P_{h_{z}} ) \le  T f_{1} (   x, z,P_{\nu_{x}} , P_{\nu_{z}} )+\eta.\end{align*} Hence if we find some unit vectors $ \mu_{x}, \mu_{z} $ and rotations $P_{\mu_{x}} , P_{\mu_{z}} $ such that  
\begin{align*} \label{eq:ff} \midrg_{h_{x},h_{z} \in  S^{n-1} }& T f_{1} (   x, z, P_{h_{x}}, P_{h_{z}}  )\\ & \le \frac{1}{2} \big\{ T f_{1} (   x, z, P_{\nu_{x}} , P_{\nu_{z}} ) 
+ T f_{1} (  x, z, P_{\mu_{x}} , P_{\mu_{z}}) + \eta \big\} ,  \end{align*} then we obtain (\ref{eq:hsecond}) by showing
\begin{align} \begin{split}  \frac{1}{2} \big\{ T f_{1} (   x, z, P_{\nu_{x}} , P_{\nu_{z}} )  
+ T f_{1} (  x, z, P_{\mu_{x}} , P_{\mu_{z}}) \big\} - f_{1}(x,z) <  \sigma  - \eta .  \end{split}
\end{align}
Denote $\mathbf{v}=\frac{x-z}{|x-z|}$, $ y_{V} = \big\langle y, \mathbf{v} \big\rangle $ and $ y_{V_{\perp}}= y- y_{V}\mathbf{v}$. Then $y$ is orthogonally decomposed into $ y_{V}\mathbf{v}$ and $ y_{V_{\perp}}$. By using Taylor expansion, we know that for any $h_{x} $ and $h_{z} $,
\begin{align*} f_{1}&(x+\epsilon h_{x}, z+\epsilon h_{z}  ) \\ & =  f_{1}(x, z) + C \delta |x-z|^{\delta -1}(h_{x}-h_{z})_{V} \epsilon + 2M\langle x+z , h_{x}+h_{z} \rangle  \epsilon \\ & + 
\frac{1}{2}C \delta  |x-z|^{\delta -2} \big\{ (\delta -1)(h_{x}-h_{z})_{V}^{2} + |(h_{x}-h_{z})_{V^{\perp}}|^{2} \big\} \epsilon^{2} \\ & + M |  h_{x}+h_{z}|^{2} \epsilon^{2} + \mathcal{E}_{x,z}( \epsilon h_{x}, \epsilon h_{z}) ,
 \end{align*}
where $ \mathcal{E}_{x,z}(h_{x},h_{z})$ is the second-order error term.
Now we estimate the error term by Taylor's theorem as follows:
$$ | \mathcal{E}_{x,z}( \epsilon h_{x}, \epsilon h_{z}) | \le C |( \epsilon h_{x} , \epsilon  h_{z})^{t}  |^{3} (|x-z|-2\epsilon )^{\delta -3} $$
if $|x-z| > 2 \epsilon$. Thus if we choose $ N \ge \frac{100C}{\delta} $, we get
$$  | \mathcal{E}_{x,z}( \epsilon h_{x}, \epsilon h_{z}) | \le 10  |x-z|^{\delta -2}  \epsilon^{2}. $$

Now we establish \eqref{eq:hsecond}. 
We first consider a small constant $0<\Theta<4$ to be determined later
and we divide again this case into two separate subcases. 
In the first subsection, we consider the case when $\nu_{x}$, $\nu_{z}$ are in almost opposite directions and nearly parallel to the vector $ x- z $. 
Otherwise, it is covered in the second subsection.
In each case, we will choose proper rotations and investigate changes in the value of the auxiliary function $f_{1}$.
The concavity of $f_{1}$ plays a key role in both cases.

\subsubsection{Case \texorpdfstring{$(\nu_{x}-\nu_{z})_{V}^{2} \ge (4 - \Theta)  $}.}
Observe that
\begin{align*}  \midrg_{\nu_{x},\nu_{z} \in S^{n-1}} & T  f_{1}(  x, z, P_{\nu_{x}} , P_{\nu_{z}}  )\\ & \le  \frac{1}{2} \big\{ T f_{1}(  x, z, P_{\nu_{x}} , P_{\nu_{z}}  ) 
+ T f_{1}( x, z, -P_{\nu_{x}} , -P_{\nu_{z}}  ) + \eta \big\}  \end{align*} and 
\begin{align*}  \frac{1}{2} & \big\{ T  f_{1}(  x, z,P_{\nu_{x}} , P_{\nu_{z}} ) + T f_{1}(  x, z, -P_{\nu_{x}} , -P_{\nu_{z}}  ) \big\} -  f_{1}(x, z) \\ & =
\frac{\alpha}{2} \big\{  f_{1}(x+\epsilon \nu_{x}, z+\epsilon \nu_{z}) + f_{1}(x-\epsilon \nu_{x}, z-\epsilon \nu_{z})   - 2  f_{1}(x, z) \big\} \\  & \  +
\frac{\beta}{2} \bigg\{  \kint_{B_{\epsilon}^{e_{1}} }f_{1}(x+P_{\nu_{x}} h,z+P_{\nu_{z}} h) d \mathcal{L}^{n-1}(h) \\ & \qquad +  \kint_{B_{\epsilon}^{e_{1}} }f_{1}(x- P_{\nu_{x}}h,z-P_{\nu_{z}} h) d \mathcal{L}^{n-1}(h)   - 2  f_{1}(x, z) \bigg\}.
\end{align*}
We first estimate the $\alpha$-term. Using the Taylor expansion of $f_{1}$ and the above estimates, we get
\begin{align*}  &  f_{1}(x+\epsilon\nu_{x}, z+\epsilon\nu_{z})  + f_{1}(x-\epsilon \nu_{x}, z-\epsilon \nu_{z})   - 2  f_{1}(x, z) \\
& =  C \delta |x-z|^{\delta -2} \big\{ (\delta -1)(\nu_{x}-\nu_{z})_{V}^{2} + |(\nu_{x}-\nu_{z})_{V^{\perp}}|^{2} \big\}\epsilon^{2}
+2 M |  \nu_{x}+\nu_{z}|^{2}\epsilon^{2} \\ &   \ \  \  +  \mathcal{E}_{x, z}(\epsilon  \nu_{x},\epsilon \nu_{z}) +  \mathcal{E}_{x,z}(-\epsilon \nu_{x},-\epsilon \nu_{z})
 \\&
\le    C \delta |x-z|^{\delta -2} \{ (\delta -1) (4- \Theta)  + \Theta  \}\epsilon^{2} +2 M (2\epsilon)^{2} + 20|x-z|^{\delta -2} \epsilon^{2}  \\&
\le \big[  C \delta |x-z|^{\delta -2} \{ (\delta -1) (4- \Theta) + \Theta \} + 8M+ 20 |x-z|^{\delta -2} \big]  \epsilon^{2}.
\end{align*}
And note that \begin{align} \label{eq:hfourth} |P_{\nu_{x}}h -P_{\nu_{z}}h | \le |\nu_{x}+\nu_{z}|, \end{align}
for some proper $P_{\nu_{x}}, P_{\nu_{z}} $ and for any $ h \in B_{1}^{e_{1}} $(see \cite[ Appendix A]{alpr2018lipschitz}), to see that
\begin{align*} & \kint_{B_{\epsilon}^{e_{1}} }  f_{1}(x+ P_{\nu_{x}}h,z+P_{\nu_{z}} h) d \mathcal{L}^{n-1}(h)   -f_{1}(x, z) \\& \
=  \kint_{B_{\epsilon}^{e_{1}} } \bigg[ C \delta |x-z|^{\delta -1}(P_{\nu_{x}} h-P_{\nu_{z}} h )_{V}+2M\langle x+z ,P_{\nu_{x}} h+P_{\nu_{z}} h \rangle  \\& 
\qquad \qquad +\frac{C}{2} |x-z|^{\delta -2}\big\{ (\delta -1)(P_{\nu_{x}}h-P_{\nu_{z}} h)_{V}^{2} + |(P_{\nu_{x}} h-P_{\nu_{z}} h)_{V^{\perp}}|^{2} \big\} \\& 
\qquad \qquad +M| P_{\nu_{x}}h+P_{\nu_{z}} h|^{2} + \mathcal{E}_{x,z}(h_{x},h_{z}) \bigg]  d \mathcal{L}^{n-1}(h) \\& \
= \frac{1}{2} \kint_{B_{\epsilon}^{e_{1}} } \bigg[ C |x-z|^{\delta -2} \big\{ (\delta -1)(P_{\nu_{x}}h-P_{\nu_{z}} h)_{V}^{2} + |(P_{\nu_{x}}h-P_{\nu_{z}} h)_{V^{\perp}}|^{2} \big\} \\&
\qquad \qquad + 2M| P_{\nu_{x}}h+P_{\nu_{z}} h|^{2} + 2 \mathcal{E}_{x,z}(h_{x},h_{z}) \bigg]  d \mathcal{L}^{n-1}(h) \\& \
\le  \frac{1}{2}  \big\{  |x-z|^{\delta -2} (C \Theta + 20) + 8M \big\} \epsilon ^{2}.
\end{align*} The last inequality follows from $ |\nu_{x}+\nu_{z}| ^{2} \le \Theta $. In the same way, it is also obtained
\begin{align*}\kint_{B_{\epsilon}^{e_{1}} } f_{1}(x-P_{\nu_{x}}h,&z-P_{\nu_{z}} h) d \mathcal{L}^{n-1}(h)   -f_{1}(x, z) \\ & \le \frac{1}{2}  \big\{  |x-z|^{\delta -2} (C \Theta + 20) + 8M \big\} \epsilon ^{2} .
\end{align*}
These estimates give \begin{align*}  \frac{1}{2} & \big\{ T f_{1}(  x, z, \nu_{x}, \nu_{z} ) + T f_{1}(  x, z, -\nu_{x}, -\nu_{z} ) \big\} -  f_{1}(x, z) \\ & \le
\frac{\alpha}{2} \big[  C \delta |x-z|^{\delta -2} \{ (\delta -1) (4- \Theta) + \Theta \} + 8M+ 20 |x-z|^{\delta -2} \big]  \epsilon^{2} \\  & \  +
\frac{\beta}{2}  \big\{ C  \Theta |x-z|^{\delta -2}  + 8M + 20|x-z|^{\delta -2} \big\} \epsilon ^{2} \\ & \le
\bigg[ \frac{C}{2} \big\{ \Theta + \alpha \delta (\delta - 1)(4 - \Theta)   \big\} +10 \bigg] |x-z|^{\delta -2}   \epsilon ^{2}  + 4M \epsilon ^{2}.
\end{align*}
Observe that $ \Theta + \alpha \delta (\delta - 1)(4 - \Theta)  < 0 $ if $\Theta < 4\alpha \delta (1-\delta)/\{ 1- \alpha \delta (\delta -1) \}$.
Then we can choose sufficiently large $C$ depending only on $ r, \delta, \alpha$ and $n$  so that 
\begin{align*}  \midrg_{\nu_{x},\nu_{z} \in S^{n-1}}  Tf_{1}( x, z, P_{\nu_{x}}, P_{\nu_{z}} ) - f_{1}(x, z)  < -M \tilde{C} \epsilon^{2}. \end{align*}
Thus, we get (\ref{eq:hsecond}).

\subsubsection{Case $(\nu_{x}-\nu_{z})_{V}^{2} \le (4 - \Theta) $}
It  is clear that $ |\nu_{x}-\nu_{z}|_{V} < 2- \Theta/4 $ in this case. Furthermore, we check that
\begin{align}  \label{show2} 
\begin{split}  \midrg_{\nu_{x},\nu_{z} \in S^{n-1}} & Tf_{1}(  x, z,  P_{h_{x}}, P_{h_{z}} ) \\ & \le \frac{1}{2}  \big\{ T f_{1}(  x, z, P_{\nu_{x}}, P_{\nu_{z}} ) + T f_{1} ( x, z, P_{-\mathbf{v}}, P_{\mathbf{v}}) \big\} + \eta .  \end{split}
\end{align}
Now we estimate the right hand side. By the DPP, it can be written as
\begin{align*} \frac{1}{2} &  \big\{  T f_{1}(  x, z, P_{\nu_{x}}, P_{\nu_{z}} ) + T f_{1} ( x, z, P_{-\mathbf{v}}, P_{\mathbf{v}})\big\} - f_{1}(x,z) \\ &
 = \frac{\alpha}{2} \big\{  f_{1}(x+ \epsilon \nu_{x}, z+\epsilon \nu_{z}) + f_{1}(x-\epsilon \mathbf{v}, z+\epsilon \mathbf{v})   - 2  f_{1}(x, z) \big\} \\  & \  +
\frac{\beta}{2} \bigg\{  \kint_{B_{\epsilon}^{e_{1}} }f_{1}(x+P_{\nu_{x}} h,z+P_{\nu_{z}} h) d \mathcal{L}^{n-1}(h)  \\ & \qquad \qquad +  \kint_{B_{\epsilon}^{e^{1}} }f_{1}(x+  P_{-\mathbf{v}}h,z+ P_{\mathbf{v}} h) d \mathcal{L}^{n-1} (h)  - 2  f_{1}(x, z) \bigg\} .
\end{align*}
We will continue in a similar way to the previous case. For the $\alpha$-term, we deduce that
\begin{align*}  &f_{1}(x+ \epsilon \nu_{x}, z+\epsilon \nu_{z}) + f_{1}(x-\epsilon \mathbf{v}, z+\epsilon \mathbf{v})   - 2  f_{1}(x, z)  \\ &
=\frac{1}{2} \bigg[  C \delta |x-z|^{\delta -1} \{ (\nu_{x}-\nu_{z})_{V} - 2 \}\epsilon + 2M\langle x+z , \nu_{x}+\nu_{z} \rangle \epsilon \\ &
\ \ \ + \frac{C}{2} \delta  |x-z|^{\delta -2} \{  (\delta -1)( (\nu_{x}-\nu_{z})_{V}^{2}\epsilon^{2} + (2 \epsilon )^{2} ) + |(\nu_{x}-\nu_{z})_{V^{\perp}}|^{2}\epsilon^{2}  \} \\ &
\ \ \ + 4M \epsilon^{2}+ M |  \nu_{x}+\nu_{z}|^{2}\epsilon^{2}  +  \mathcal{E}_{x,z}(\epsilon \nu_{x},\epsilon \nu_{z}) +  \mathcal{E}_{x,z}(-\epsilon \mathbf{v},\epsilon \mathbf{v} ) \bigg] \\ &
\hspace{-0.25em}  \le \frac{1}{2} \bigg\{ \hspace{-0.25em} - \hspace{-0.25em} \frac{\Theta}{4} C \delta |x-z|^{\delta -1} \epsilon \hspace{-0.15em} + \hspace{-0.15em} 8M\epsilon r \hspace{-0.15em}+\hspace{-0.15em} 2C \delta  |x-z|^{\delta -2}\epsilon^{2}\hspace{-0.15em} +\hspace{-0.15em} 20|x-z|^{\delta -2}\epsilon^{2}\hspace{-0.15em} +\hspace{-0.15em} 2M\epsilon^{2} \bigg\} .
\end{align*}
Then we see that 
\begin{align*} 2C & \delta  |x-z|^{\delta -2}\epsilon^{2} + 20|x-z|^{\delta -2}\epsilon^{2}+2M\epsilon^{2}
\\ & \le \frac{10}{N} (2C \delta + 20 + 2M \diam(\Omega)^{2-\delta }) |x-z|^{\delta -1}\epsilon
 \\ & \le \delta^{2} |x-z|^{\delta -1}\epsilon
\end{align*}
for sufficiently large $C$ and $N \ge 100C / \delta$ , since $|x-z| > N \epsilon / 10$ and $\Omega$ is bounded. Thus,
\begin{align*} f_{1}(x+\nu_{x}, z+\nu_{z})& + f_{1}(x-\epsilon \mathbf{v}, z+\epsilon \mathbf{v})   - 2  f_{1}(x, z) \\ &
\le \bigg\{\frac{\delta}{2} |x-z|^{\delta -1}\bigg( \delta - C\frac{\Theta}{4} \bigg)   + 4Mr \bigg\}\epsilon .
\end{align*}
Next, we estimate the $\beta$-term. By a direct calculation, we see that
\begin{align*}
& \hspace{-0.2em}  \kint_{B_{\epsilon}^{e_{1}} } \hspace{-0.3em}  \big\{  f_{1}(x+P_{\nu_{x}} h,z+P_{\nu_{z}} h)+f_{1}(x+ P_{-\mathbf{v}}h,z+P_{\mathbf{v}} h)  - 2  f_{1}(x, z) \big\} d \mathcal{L}^{n-1}(h) \\ &
=\kint_{B_{\epsilon}^{e_{1}}} \{ f_{1}(x+ P_{\nu_{x}}h,z+P_{\nu_{z}} h) - f_{1}(x, z) \} d \mathcal{L}^{n-1}(h) \\ & \qquad \qquad +  \kint_{B_{\epsilon}^{e_{1}} } \{ f_{1}(x+ P_{-\mathbf{v}}h,z+P_{\mathbf{v}}h) - f_{1}(x, z) \} d \mathcal{L}^{n-1}(h) \\ &
\le \kint_{B_{\epsilon}^{e_{1}}} \bigg[  \frac{C}{2} \delta |x-z|^{\delta-2} \big\{(\delta -1)(P_{\nu_{x}}h-P_{\nu_{z}} h)_{V}^{2} + |(P_{\nu_{x}}h-P_{\nu_{z}} h)_{V^{\perp}}|^{2} \big\}  \\&
\qquad \qquad +M|P_{\nu_{x}} h+P_{\nu_{z}} h|^{2} + \mathcal{E}_{x,z}(h_{x},h_{z}) \bigg]  d \mathcal{L}^{n-1}(h) \\&
\ \ \ +  \kint_{B_{\epsilon}^{e_{1}} } \bigg[  \frac{C}{2}  \delta |x-z|^{\delta-2}(2h)^{2} + \mathcal{E}_{x,z}(-\epsilon \mathbf{v},\epsilon\mathbf{v} ) \bigg] d \mathcal{L}^{n-1}(h) \\&
\le  C\delta |x-z|^{\delta-2}(2 \epsilon)^{2} + M(2 \epsilon)^{2} +  20|x-z|^{\delta -2} \epsilon^{2} ,
\end{align*}
we have used (\ref{eq:hfourth}) for the last inequality.
Now we observe that
$$ C\delta |x-z|^{\delta-2}(2 \epsilon)^{2} + M(2 \epsilon)^{2} +  20|x-z|^{\delta -2} \epsilon^{2} \le  2 \delta^{2} |x-z|^{\delta -1}\epsilon. $$ Therefore $\beta$-term is estimated by $ 2 \delta^{2} |x-z|^{\delta -1}\epsilon. $

Combining these estimates, we conclude
\begin{align*}  \frac{1}{2} &  \big\{ Tf_{1}(  x, z, P_{\nu_{x}}, P_{\nu_{z}} ) + T f_{1} (  x, z, P_{-\mathbf{v}},  P_{\mathbf{v}}) \big\} - f_{1}(x,z)  \\ &
\le \frac{\alpha}{2}  \bigg\{\frac{\delta}{2} |x-z|^{\delta -1}\bigg( \delta - C\frac{\Theta}{4} \bigg)   + 4Mr \bigg\}\epsilon + \beta  \delta^{2} |x-z|^{\delta -1}\epsilon \\&
\le -M \tilde{C} \epsilon^{2} 
\end{align*}
for sufficiently large $C=C(r,\delta, \alpha, n)$. Combining this with \eqref{show2}, we obtain (\ref{eq:hfirst}). 
\subsection{Case \texorpdfstring{$0 < |x-z| \le N \epsilon / 10$}.} We observe that
\begin{align*} | f_{1}&(x+h_{x}, z+h_{z}) - f_{1}(x,z) |  \\ &
=C ( |x -z +h_{x} - h_{z}|^{\delta} - |x-z|^{\delta} )+ M( |x +z +h_{x} + h_{z}|^{2} - |x+z|^{2}) \\&
 \le C|h_{x} - h_{z}|^{\delta} + 2M|x+z| \ |h_{x} +h_{z}|+M|h_{x} +h_{z}|^{2} \\&
\le 2C \epsilon^{\delta} + 8Mr\epsilon + 4M \epsilon^{2} \\&
\le 3C\epsilon^{\delta}
\end{align*}
 for any $x , z \in B_{r}$ and $h_{x}, h_{z} \in B_{\epsilon}$ if $C=C(r, \delta)$ is sufficiently large. Therefore, we see that
\begin{align*} & \sup_{h_{x},h_{z} \in S^{n-1}} T f_{1}( x, z, P_{h_{x}}, P_{h_{z}}  ) - f_{1}(x,z)  \\& \  = \sup_{h_{x},h_{z} \in S^{n-1}} \bigg[ \alpha\{ f_{1}(x+ h_{x},z+ h_{z})-f_{1}(x,z)   \}  + \\ & \qquad \qquad \beta \kint_{B_{\epsilon}^{e_{1}} }\{f_{1}(x+ P_{h_{x}}h,z+P_{h_{z}} h) - f_{1}(x,z) \}  d \mathcal{L}^{n-1}(h) \bigg] \\& \
\le 3\alpha C\epsilon^{\delta}+ 3\beta C\epsilon^{\delta} = 3C\epsilon^{\delta}
\end{align*} and
\begin{align} \begin{split} \label{anes} \sup_{h_{x},h_{z} \in S^{n-1}} T f(  x, z,P_{h_{x}}, P_{h_{z}}  )& =\sup_{h_{x},h_{z} \in S^{n-1}} T( f_{1}-f_{2})(x, z, P_{h_{x}}, P_{h_{z}}  ) \\ & \le \sup_{h_{x},h_{z}  \in S^{n-1}} Tf_{1}(  x, z, P_{h_{x}}, P_{h_{z}}  ) . \end{split} \end{align}

By the assumption, we can find $i \in \{1, 2, \cdots , N\}$ such that  $$ (i-1) \frac{ \epsilon}{10} < |x-z| \le i\frac{ \epsilon}{10}. $$
We deduce that
\begin{align*} & \inf_{h_{x},h_{z} \in S^{n-1}} T f(  x, z, P_{h_{x}}, P_{h_{z}}  ) \\ & \le \sup_{h_{x},h_{z} \in S^{n-1}} T f_1(  x, z, P_{h_{x}}, P_{h_{z}}  )  - \sup_{h_{x},h_{z} \in S^{n-1}} T f_{2}(  x, z, P_{h_{x}}, P_{h_{z}}  ) \\&
\le  \sup_{h_{x},h_{z} \in S^{n-1}} Tf_1(  x, z,P_{h_{x}}, P_{h_{z}}  ) - \alpha C^{2(N-i+1)} \epsilon^{\delta} \\&
= \sup_{h_{x},h_{z} \in S^{n-1}} T f_1(  x, z, P_{h_{x}}, P_{h_{z}}  ) - \alpha \bigg( C^{2}- \frac{2}{\alpha} \bigg) C^{2(N-i)} \epsilon^{\delta} - 2C^{2(N-i)} \epsilon^{\delta} \\&
\le \sup_{h_{x},h_{z} \in S^{n-1}} T f_1(  x, z,P_{h_{x}}, P_{h_{z}}  ) - 2f_{2}(x,z) - 8C \epsilon ^{\delta}.
\end{align*}
The last inequality is obtained if $C$ is large. Therefore, we calculate that
\begin{align*}  \midrg_{\nu_{x},\nu_{z} \in S^{n-1}} T f(  x, z, P_{h_{x}}, P_{h_{z}} ) & \le   \hspace{-0.45em} \sup_{h_{x},h_{z} \in S^{n-1}} \hspace{-0.25em} T f_1(  x, z,P_{h_{x}}, P_{h_{z}}  ) - f_{2}(x,z)-4C\epsilon^{\delta}
\\& \le  f_{1}(x,z) + 3C\epsilon^{\delta} -  f_{2}(x,z)-4C\epsilon^{\delta}
\\& \le f(x,z) - C\epsilon^{\delta},
\end{align*}
and then we get \eqref{eq:hfirst} for choosing $C= C(r, \delta, \alpha, n)$ sufficiently large. 
\subsection{Case \texorpdfstring{$|x-z| = 0$}.} According to the results in the previous sections, we observe that 
$$ |u_{\epsilon} (x,t) - u_{\epsilon} (z,s) | \le C_{1} ||u_{\epsilon}||_{\infty} ( |x-z|^{\delta} + \epsilon^{\delta}) ,$$ for any $x, z \ (x \neq z) \in B_{r}(0)$, $ -r^{2}<t<0$, $|t-s| < \epsilon^{2} / 2 $ and some $C_{1}= C_{1}(r, \delta, \alpha , n)>0$.

Fix $x \in B_{r}(0)$ and $t,s \in (-r^{2},0)$ with $|t-s| < \epsilon^{2} / 2$. 
Then we can choose a point $y \in B_{\epsilon}(x)$ and deduce that
\begin{align*}
|u_{\epsilon} (x,t) - u_{\epsilon} (x,s) | & \le |u_{\epsilon} (x,t) - u_{\epsilon} (y,s) | +|u_{\epsilon} (y,s) - u_{\epsilon} (x,s) | 
\\ & \le C_{1} ||u_{\epsilon}||_{\infty} ( |x-y|^{\delta} + \epsilon^{\delta})
\\ & \le 2C_{1} ||u_{\epsilon}||_{\infty} \epsilon^{\delta}.
\end{align*}
Now set $C =2C_{1}$. Then we can conclude the proof of this lemma.
\end{proof}

For any $x \in B_{r}$ and $ -r^{2} < s < t < 0  $, consider a cylinder $B_{\sqrt{t-s}}(x) \times [s,t] $. Applying Lemma \ref{lem1}, we find that
\begin{align*} \osc_{B_{\sqrt{t-s}}(x) \times \big(\tau-\frac{ \epsilon^{2}}{2},\tau \big) } u_{\epsilon} 
  \le C(r, \delta, \alpha, n) ||u_{\epsilon}||_{\infty} (|t-s|^{\frac{\delta}{2}} + \epsilon^{\delta})
\end{align*} for any $ \tau \in (s,t)$.
Then we obtain the following estimate 
\begin{align*} |u_{\epsilon} (x,t) - u_{\epsilon} (x,s) |   \le C(r, \delta, \alpha, n) ||u_{\epsilon}||_{\infty} (|t-s|^{\frac{\delta}{2}} + \epsilon^{\delta})
\end{align*} by virtue of Lemma \ref{lem2}.

Combining this and Lemma \ref{lem1}, we get the desired regularity.

\begin{theorem} \label{thm1} 
Let $ \bar{Q}_{2r}  \subset \Omega_{T}$, $ 0 <  \delta, \alpha  < 1 $ and $ \epsilon > 0 $ is small.
Suppose that $u_{\epsilon}$ satisfies the $\alpha$-parabolic DPP with boundary data $F \in L^{\infty}(\Gamma_{\epsilon, T} )$.
Then for any  $x, z \in B_{r}(0)$ and $ -r^{2} < t, s< 0  $,
$$ |u_{\epsilon} (x,t) - u_{\epsilon} (z,s) | \le C ||u_{\epsilon}||_{\infty} ( |x-z|^{\delta}+ |s-t|^{\frac{\delta}{2}} + \epsilon^{\delta}), $$ 
where $C>0$ is a constant which only depends on $r, \delta, \alpha$ and $n$.
\end{theorem}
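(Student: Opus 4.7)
The plan is to combine Lemma \ref{lem1}, which provides a spatial H\"older estimate on a single $\epsilon$-time slice, with Lemma \ref{lem2}, which controls temporal oscillation in terms of spatial oscillation on such slices. I would split the increment by the triangle inequality,
$$ |u_{\epsilon}(x,t)-u_{\epsilon}(z,s)| \le |u_{\epsilon}(x,t)-u_{\epsilon}(x,s)| + |u_{\epsilon}(x,s)-u_{\epsilon}(z,s)|, $$
and treat the two terms separately.

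The pure spatial piece $|u_{\epsilon}(x,s)-u_{\epsilon}(z,s)|$ is handled directly by Lemma \ref{lem1}, contributing $C\|u_{\epsilon}\|_{\infty}(|x-z|^{\delta}+\epsilon^{\delta})$. For the temporal piece I assume without loss of generality $s<t$ and localize by introducing the auxiliary parabolic cylinder $B_{\sqrt{t-s}}(x)\times[s,t]$, whose spatial radius is tuned to the time gap. Applying Lemma \ref{lem1} on this smaller cylinder bounds the oscillation of $u_{\epsilon}$ on every $\epsilon$-time slice $B_{\sqrt{t-s}}(x)\times(\tau-\epsilon^{2}/2,\tau]$ contained in it by $C\|u_{\epsilon}\|_{\infty}(|t-s|^{\delta/2}+\epsilon^{\delta})$. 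Lemma \ref{lem2}, applied with its parameter $r$ replaced by $\sqrt{t-s}$, then promotes this uniform slice bound into the temporal estimate
$$ |u_{\epsilon}(x,t)-u_{\epsilon}(x,s)| \le C\|u_{\epsilon}\|_{\infty}(|t-s|^{\delta/2}+\epsilon^{\delta}). $$
Adding the two contributions gives the stated parabolic H\"older inequality.

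The main obstacle is more bookkeeping than substance: one has to check that the containment hypotheses needed by Lemma \ref{lem1} and Lemma \ref{lem2} on the auxiliary cylinder still hold, which follows from the interior assumption $\bar{Q}_{2r}\subset\Omega_{T}$ together with $\sqrt{t-s}\le 2r$. One also has to dispose of the degenerate regime $t-s\le\epsilon^{2}/2$, where Lemma \ref{lem2} cannot be meaningfully invoked; in that range the estimate $|u_{\epsilon}(x,t)-u_{\epsilon}(x,s)|\le 2C\|u_{\epsilon}\|_{\infty}\epsilon^{\delta}$ follows by inserting an auxiliary spatial point $y\in B_{\epsilon}(x)$ and applying Lemma \ref{lem1} twice, exactly as in the closing subsection of its proof.
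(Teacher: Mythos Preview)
Your proposal is correct and follows essentially the same route as the paper: split via the triangle inequality, handle the spatial increment directly by Lemma~\ref{lem1}, and for the temporal increment introduce the auxiliary cylinder $B_{\sqrt{t-s}}(x)\times[s,t]$, bound the slice oscillations there by Lemma~\ref{lem1}, and then invoke Lemma~\ref{lem2}. Your additional remarks on the containment hypotheses and on the degenerate regime $t-s\le\epsilon^{2}/2$ (handled as in Section~4.3) are exactly the bookkeeping the paper leaves implicit.
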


\section{Lipschitz regularity}
We will prove Lipschitz type regularity for the function $u_{\epsilon}$ in this section. 
In the previous section, we utilized the concavity on the distance of two points of the auxiliary function to get the result. 
In order to prove Lipschitz estimate, the auxiliary function is also needed to have this property.
However, we no longer have the strong concavity that was helpful in the proof there.
Therefore, we need to build the proof in a different manner in several places.

For this reason, we will construct other (concave) auxiliary function for proving Lipschitz estimate. This causes some difficulties compared to the H\"{o}lder case. 
As in the proof of Lemma \ref{lem1}, we will distinguish two subcases. More delicate calculations are needed when two points are sufficiently far apart.
Note that we will exploit the H\"{o}lder regularity result here.
In the case that two points are sufficiently close, the proof is quite similar to the previous section.
 
\begin{lemma} \label{lem3}
Let $\bar{B}_{2r}(0) \times [-2r^{2}-\epsilon^{2} / 2, \epsilon^{2}/2] \subset \Omega_{T}$, $ 0< \alpha <1 $ and $\epsilon > 0 $ is small. 
Suppose that $u_{\epsilon}$ satisfies the $\alpha$-parabolic DPP with boundary data $F \in L^{\infty}(\Gamma_{\epsilon, T} )$.
Then,
$$ |u_{\epsilon} (x,t) - u_{\epsilon} (z,s) | \le C||u_{\epsilon}||_{\infty} ( |x-z| + \epsilon) ,$$ whenever $x, z \in B_{r}(0)$, $ -r^{2}<t<0$ and $ |t-s| < \epsilon^{2} / 2 $ and $C>0$ is a constant which only depends on $r, \alpha$ and $n$.
\end{lemma}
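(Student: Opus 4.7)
The plan is to mirror the comparison scheme of Lemma \ref{lem1} with a modified, Lipschitz-type auxiliary function that retains just enough radial concavity, and to invoke Theorem \ref{thm1} to bridge the range where this modified function alone is too weak to control $u_\epsilon(x,t)-u_\epsilon(z,s)$.

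After an initial rescaling reducing to $\|u_\epsilon\|_\infty \le r$, I would replace $f_1$ in \eqref{f1} by
\begin{equation*}
\tilde f_1(x,z) = C\,\phi(|x-z|) + M|x+z|^{2}, \qquad \phi(r) = r - K r^{1+\gamma},
\end{equation*}
where $\gamma \in (0,1)$ is small and $K>0$ is small enough that $\phi$ is monotone on the relevant range; the crucial point is that $\phi''(r) = -K\gamma(1+\gamma)r^{\gamma-1}<0$ restores the strict radial concavity that was free in the H\"older case. Keeping $f_2$ and $g$ as in \eqref{f2}--\eqref{gg} with $\delta=1$ and assembling $H = \tilde f_1 - f_2 + g$ as in \eqref{comfn}, the contradiction setup and the reduction to the inequality
\begin{equation*}
\midrg_{\nu_{x},\nu_{z}\in S^{n-1}} T f\bigl(x,z,P_{\nu_{x}},P_{\nu_{z}}\bigr) - f(x,z) < \sigma
\end{equation*}
with $\sigma \ge -M\tilde C(r)\epsilon^{2}$ carry over essentially verbatim from the proof of Lemma \ref{lem1}.

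I would then repeat the three-case analysis of the H\"older proof. In the far-apart regime $|x-z| > N\epsilon/10$, the near-antiparallel subcase $(\nu_x-\nu_z)_V^{2}\ge 4-\Theta$ is handled by pairing $(\nu_x,\nu_z)$ with $(-\nu_x,-\nu_z)$; a second-order Taylor expansion of $\tilde f_1$ now produces a leading contribution of order $-CK\gamma(1+\gamma)(4-\Theta)|x-z|^{\gamma-1}\epsilon^{2}$ that replaces the $C\delta(\delta-1)|x-z|^{\delta-2}\epsilon^{2}$ of the H\"older case, and once $CK$ is large enough relative to $M$ and the leftover $\Theta$-term it beats $\sigma$. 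In the complementary subcase, pairing $(\nu_x,\nu_z)$ with $(-\mathbf v, \mathbf v)$ yields the first-order term $-C(\Theta/4)\epsilon$ exactly as before, and the extra first-order contribution $CK(1+\gamma)|x-z|^{\gamma}\epsilon$ from the concave correction is dominated once $K$ is taken small relative to the domain diameter. The close subcase $0 < |x-z| \le N\epsilon/10$ and the diagonal case $|x-z|=0$ proceed verbatim using $f_2$ and the boundedness of $\phi'$, with \eqref{anes} and the cancellation of $-f_2$ against $C^{2(N-i)}\epsilon$ terms going through unchanged.

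The principal obstacle I expect is the joint calibration of $C$, $K$, $\gamma$, $\Theta$ and $M$: the concave correction must be strong enough to beat the positive second-order remainders in the near-antiparallel subcase, yet its first-order term must remain subordinate to $-C(\Theta/4)\epsilon$ in the generic subcase, and one must also ensure $\tilde f_1$ dominates $u_\epsilon(x,t)-u_\epsilon(z,s)$ on $\Sigma_2\setminus\Sigma_1$. This last dominance is where Theorem \ref{thm1} is exploited: when $|x-z|$ is so small that the linear quantity $C|x-z|$ cannot alone absorb $u_\epsilon(x,t)-u_\epsilon(z,s)$, the H\"older bound
\begin{equation*}
|u_\epsilon(x,t)-u_\epsilon(z,s)| \le C_{\delta}\|u_\epsilon\|_\infty\bigl(|x-z|^{\delta}+|s-t|^{\delta/2}+\epsilon^{\delta}\bigr)
\end{equation*}
with $\delta$ close to $1$ already yields a bound comparable to $C(|x-z|+\epsilon)$, so the proof reduces to the comparison argument on the complement and glues the two regimes together. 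As in Remark \ref{remark}, the corresponding bound in the time variable then follows from Lemma \ref{lem2} applied to the resulting $\epsilon$-time slice oscillation estimate.
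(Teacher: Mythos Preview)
Your overall architecture---the concave perturbation $\phi(r)=r-Kr^{1+\gamma}$, the contradiction setup, and the three-case split---matches the paper. But the calibration you sketch does not close, and you misplace the role of the H\"older estimate.

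The failure is in the near-antiparallel subcase with a \emph{constant} $\Theta$. The transverse positive term coming from $\tfrac{\phi'(|x-z|)}{|x-z|}|(\nu_x-\nu_z)_{V^\perp}|^2$ scales like $C\Theta|x-z|^{-1}\epsilon^2$, while your concavity gain scales like $CK|x-z|^{\gamma-1}\epsilon^2$ with $\gamma-1>-1$. As $|x-z|\to 0$ (still $>N\epsilon/10$) the ratio blows up; no choice of constants $C,K$ fixes this uniformly. The paper resolves it by taking $\Theta=|x-z|^{s}$ with $s=\gamma$ (your notation), so both terms scale identically and the coefficient $\omega_0$ ($=K$) can be taken large.

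This forces the complementary subcase to change. With $\Theta=|x-z|^s$, the first-order gain from pairing with $(-\mathbf v,\mathbf v)$ is only of order $-C|x-z|^{s}\epsilon$, which vanishes with $|x-z|$ and cannot absorb the first-order term $2\alpha M\langle x+z,\nu_x+\nu_z\rangle\epsilon$ by the crude bound $|x+z|\le 2r$. This is precisely where Theorem~\ref{thm1} enters in the paper: from the counter-assumption one has $M|x+z|^{2}<u_\epsilon(x,t)-u_\epsilon(z,s)$, and the H\"older bound with $\delta=\tfrac12$ then gives $|x+z|\lesssim |x-z|^{1/4}$, so that $M\langle x+z,\nu_x+\nu_z\rangle\epsilon\lesssim |x-z|^{1/4}\epsilon$ matches the negative term once $s=\tfrac14$.

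Your stated use of Theorem~\ref{thm1}---to secure dominance of $\tilde f_1$ over $u_\epsilon(x,t)-u_\epsilon(z,s)$ on $\Sigma_2\setminus\Sigma_1$---is not needed and does not give what you claim: dominance there follows, exactly as in Lemma~\ref{lem1}, from $M|x+z|^2+g(t,s)$ for $M$ large (using $|x+z|^2+|x-z|^2\ge 2r^2$ when $x$ or $z\notin B_r$), and a H\"older bound $|x-z|^\delta$ with $\delta<1$ is in any case \emph{larger}, not smaller, than $|x-z|$ for small $|x-z|$.
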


\begin{proof} 
We can expect that $ |x-z| $ will play the same role as $f_{1}$ in the H\"{o}lder case. But for a Lipschitz type estimate, we cannot deduce the desired result by using that function $ |x-z| $. Therefore, we need to define a new auxiliary function $\omega : [0, \infty) \to [0, \infty)$. First define $$\omega(t) = t- \omega_{0}t^\gamma  \qquad 0 \le t \le \omega_{1} := (2\gamma \omega_{0})^{-1/(\gamma-1)},$$ where $\gamma \in (1,2)$ is a constant and $ \omega_{0} >0$ will be determined later. 
Observe that $$ \omega '(t) = 1-\gamma\omega_{0}t^{\gamma-1} \in [ 1/2,1 ] \qquad \textrm{for} \ \ 0 \le t \le \omega_{1}$$ and$$ \omega ''(t) = -\gamma(\gamma-1)\omega_{0}t^{\gamma-2} <0 \qquad \textrm{for} \ \ 0 \le t \le \omega_{1}.$$
Then we can construct $\omega$ to be increasing, strictly concave and $C^{2}$ in $(0, \infty)$.

Assume that $ ||u_{\epsilon}||_{\infty} \le r  $ by scaling as in the previous section, and we define $$f_{1}(x,z) = C \omega( | x - z | ) + M | x+ z|^{2}. $$
Consider the functions $ f_{2} $ and $g$ for $ \delta = 1$ as (\ref{f2}) and (\ref{gg}), respectively.
Now we set again the auxiliary function $H$ by $$ H(x, z, t, s) = f_{1}(x,z) - f_{2}(x,z) + g(t,s)$$
and let $$ f(x,z)= f_{1}(x,z) - f_{2}(x,z).$$
As in the previous section, we will first deduce that
$$ |u_{\epsilon}(x,t) - u_{\epsilon}(z,s) | \le  C( |x-z|+ \epsilon ) \qquad \textrm{in} \ \  \  \ \Sigma_{2} \backslash \Upsilon . $$
We can choose $M$ sufficiently large so that $$ u_{\epsilon}(x,t) - u_{\epsilon}(z,s) - H (x,z,t,s) \le  C ^ {2N} \epsilon +  C \epsilon \qquad  \textrm{in} \ \ \ \Sigma_{2} \backslash \Sigma_{1}  .  $$ 
Thus, for proving the lemma, it is sufficient to show that 
$$ u_{\epsilon}(x,t) - u_{\epsilon}(z,s) - H(x,z,t,s) \le  C ^ {2N} \epsilon +C \epsilon  \qquad  \textrm{in} \ \ \ \Sigma_{1}\backslash \Upsilon .  $$
Suppose not. Then \begin{align*} K := \sup_{(x,z,t,s) \in \Sigma_{1} \backslash \Upsilon} (u_{\epsilon}(x,t) - u_{\epsilon}(z,s) - H(x,z,t,s)) > C ^ {2N} \epsilon+C \epsilon   . \end{align*}
In this case, we can choose $(x', z', t', s') \in  \Sigma_{1} \backslash \Upsilon$ such that 
\begin{align}  \label{counter} u_{\epsilon}(x',t') - u_{\epsilon}(z',s') -H (x',z',t',s')) \ge K - \eta \end{align}
for any $ \eta > 0 $. 

Similarly as in Section 4, we need to establish (\ref{eq:hfirst}) in order to prove Lemma \ref{lem3}. The only difference is the right-hand side of the inequality.  
In this case, it is sufficient to deduce that the left-hand side of (\ref{eq:hfirst}) is less than $ \sigma = \min \{ -M \epsilon , -M \tilde{C} \epsilon^{2} \} $, where $\tilde{C}$ only depends on $r$.

We use again the notation $ (x, z, t, s) $ instead of $ (x', z', t', s') $.

\subsection{Case \texorpdfstring{$|x-z| > N \epsilon / 10$}.}
For the same reason as in the previous section, we shall deduce (\ref{eq:hsecond}). To do this, it is sufficient to show (\ref{eq:ff}) for any $\eta > 0$ and some $P_{\nu_{x}} \in \mathbf{R}_{\nu_{x}}$, $P_{\nu_{z}} \in \mathbf{R}_{\nu_{z}}$.

Now we calculate the Taylor expansion of $f_{1}$. We see \begin{align} \label{eq:third} \begin{split}
 f_{1}&(x+  \epsilon h_{x}, z+ \epsilon h_{z}  ) - f_{1}(x, z) \\ & 
\le  C \omega ' {(|x-z|)}(h_{x}-h_{z})_{V} \epsilon + 2M\langle x+z , h_{x}+h_{z} \rangle  \epsilon \\ & + \frac{1}{2}C \omega ''(|x-z|) (h_{x}-h_{z})_{V}^{2} \epsilon^{2} +
\frac{1}{2}C \frac{\omega'(|x-z|)}{|x-z|} |(h_{x}-h_{z})_{V^{\perp}}|^{2} \epsilon^{2}  \\ &
+ (4M +10  |x-z|^{\gamma -2})\epsilon^{2}\end{split}
\end{align}  for any $h_{x} ,h_{z} \in \mathbb{R}^{n}$.
Then we check that
$$ | \mathcal{E}_{x,z}( h_{x}, h_{z}) | \le C |(h_{x} , h_{z})^{t}  |^{3} (|x-z|-2\epsilon )^{\gamma -3} \le  C |(h_{x} , h_{z})^{t}  |^{3} (|x-z|-2\epsilon )^{\gamma -3}$$
if $|x-z| > 2 \epsilon$ and $h_{x}, h_{z} \in B_{\epsilon}$, because for the third derivatives it holds $D_{(x,z)}^{3} \omega (| x-z|) \le C |x-z|^{\gamma -3}$ for some constant $C>0$.  Thus if we choose $ N \ge \frac{100C}{\delta} $, we get
$$  | \mathcal{E}_{x,z}( h_{x},  h_{z}) | \le 10  |x-z|^{\gamma -2}  \epsilon^{2}. $$ 

For estimating $\alpha$-term in $T f_{1}(x, z, P_{\nu_{x}} , P_{\nu_{z}})$, we can use  (\ref{eq:third}) directly. On the other hand, more observations about $P_{\nu_{x}} , P_{\nu_{z}}$ are needed to estimate $\beta$-term. First we see that
\begin{align*} f_{1}&(x+  \epsilon P_{\nu_{x}} \zeta, z+ \epsilon P_{\nu_{z}} \zeta ) - f_{1}(x, z) \\ & = C \omega ' {(|x-z|)}( P_{\nu_{x}}\zeta-P_{\nu_{z}} \zeta )_{V} \epsilon + 2M\langle x+z , P_{\nu_{x}} \zeta +P_{\nu_{z}} \zeta  \rangle  \epsilon \\ & + \frac{1}{2}C \omega ''(|x-z|) (P_{\nu_{x}} \zeta -P_{\nu_{z}} \zeta )_{V}^{2} \epsilon^{2} +
\frac{1}{2}C \frac{\omega'(|x-z|)}{|x-z|} |(P_{\nu_{x}} \zeta -P_{\nu_{z}} \zeta )_{V^{\perp}}|^{2} \epsilon^{2} \\& +
  M |P_{\nu_{x}} \zeta+P_{\nu_{z}}\zeta|^{2}\epsilon^{2} + \mathcal{E}_{x,z}(\epsilon h_{x}, \epsilon h_{z}) 
\end{align*} from (\ref{eq:third}). Due to rotational symmetry, integral over the first-order terms is zero. Note that $ \omega '' < 0 $ and  (\ref{eq:hfourth}) to see that
\begin{align*} \kint_{B_{\epsilon}^{e_{1}} } & f_{1}(x+P_{\nu_{x}} h,z+P_{\nu_{z}} h) d \mathcal{L}^{n-1}(h) - f_{1}(x,z) \\ &
\le  \frac{C}{2} \frac{\omega'(|x-z|)}{|x-z|}|\nu_{x}+\nu_{z}|^{2}  \epsilon^{2} + (4M +10  |x-z|^{\gamma -2})\epsilon^{2}.
\end{align*} Therefore, 
\begin{align*} Tf_{1}  &(   x, z, P_{\nu_{x}}, P_{\nu_{z}} ) - f_{1}(x,z) \\ &
\le  \alpha C \omega ' {(|x-z|)}(\nu_{x}-\nu_{z})_{V} \epsilon + 2\alpha M\langle x+z , \nu_{x}+\nu_{z} \rangle  \epsilon \\ & + \frac{\alpha}{2}C \omega ''(|x-z|) (\nu_{x}-\nu_{z})_{V}^{2} \epsilon^{2} \\ & +
\frac{1}{2}C \frac{\omega'(|x-z|)}{|x-z|}( \alpha |(\nu_{x}-\nu_{z})_{V^{\perp}}|^{2} +\beta|\nu_{x}+\nu_{z}|^{2} ) \epsilon^{2} \\ &
+ (4M +10  |x-z|^{\gamma -2})\epsilon^{2}.
\end{align*}

Now we set $ \Theta = |x-z|^{s} $ for some $s \in (0,1]$ to be chosen later.
In order to deduce (\ref{eq:hsecond}), we divide again this case into two separate subcases.

\subsubsection{$(\nu_{x}-\nu_{z})_{V}^{2} \ge 4 - \Theta $}
Consider two rotations $P_{\nu_{x}}, P_{\nu_{z}} $ which satisfy (\ref{eq:hfourth}).  Observe that  
\begin{align} \label{eq:fifth} \begin{split}  \frac{1}{2} & \big\{ Tf_{1} ( x, z,P_{\nu_{x}}, P_{\nu_{z}}  ) + Tf_{1} (  x, z, -P_{\nu_{x}}, -P_{\nu_{z}} ) \big\} -  f_{1}(x, z) \\ & \le
\frac{\alpha}{2}C \omega ''(|x-z|) (\nu_{x}-\nu_{z})_{V}^{2} \epsilon^{2} \\ & +
\frac{1}{2}C \frac{\omega'(|x-z|)}{|x-z|}( \alpha |(\nu_{x}-\nu_{z})_{V^{\perp}}|^{2} + \beta|\nu_{x}+\nu_{z}|^{2} ) \epsilon^{2}  \\ & + (4M +10  |x-z|^{\gamma -2})\epsilon^{2}. \end{split}
\end{align} Since $ \Theta \le 1 $ for sufficiently small $r$ and $  \frac{1}{2} \le  \omega ' \le 1   $ and $ \omega '' < 0 $, 
\begin{align*}  \frac{1}{2} & \big\{ Tf_{1} ( x, z,P_{\nu_{x}}, P_{\nu_{z}}  ) + Tf_{1} (  x, z, -P_{\nu_{x}}, -P_{\nu_{z}} ) \big\} -  f_{1}(x, z) \\ & \le
\frac{3}{2}\alpha C \omega ''(|x-z|) \epsilon^{2}  +
\frac{C}{2} \frac{1}{|x-z|} ( \alpha |(\nu_{x}-\nu_{z})_{V^{\perp}}|^{2} +\beta|\nu_{x}+\nu_{z}|^{2} ) \epsilon^{2}  \\ & + (4M +10  |x-z|^{\gamma -2})\epsilon^{2}.
\end{align*} We know that $ |(\nu_{x}-\nu_{z})_{V^{\perp}}|^{2} \le \Theta $ by the assumption and we also see
$$  |\nu_{x}+\nu_{z}|^{2} = 4 -  |(\nu_{x} -\nu_{z} )|^{2} \le 4 -  |(\nu_{x} -\nu_{z} )_{V}|^{2} \le \Theta .$$ Thus,
\begin{align*}  \frac{1}{2} & \big\{ Tf_{1} ( x, z, P_{\nu_{x}}, P_{\nu_{z}}  ) + Tf_{1} (  x, z, -P_{\nu_{x}}, -P_{\nu_{z}}  ) \big\} -  f_{1}(x, z) \\ & \le
\bigg\{ \frac{3}{2}\alpha C \omega ''(|x-z|)  + \frac{C}{2} \frac{\Theta}{|x-z|} + 4M +10  |x-z|^{\gamma -2} \bigg\} \epsilon^{2} .
\end{align*}
By the definition of $\omega$, $\omega''(|x-z|)= -\gamma(\gamma-1)\omega_{0}|x-z|^{\gamma-2}$ 
if $|x-z| < \omega_{1} $.

Choosing $\gamma = 1+s$. Since $|x-z|<1$, we get 
\begin{align*}  \frac{1}{2} & \big\{ Tf_{1} ( x, z,P_{\nu_{x}}, P_{\nu_{z}}  ) + Tf_{1} (  x, z, -P_{\nu_{x}}, -P_{\nu_{z}}  ) \big\} -  f_{1}(x, z) \\ & \le
\bigg[  C \bigg\{- \frac{3}{2}\alpha s(s+1) \omega_{0}  + 11  \bigg\} |x-z|^{s-1}+ 4M  \bigg] \epsilon^{2} .
\end{align*} 
Note that if $|x-z|< \omega_{1}$ (See the definition of $\omega$),
$$- \frac{3}{2}\alpha s(s+1) \omega_{0}  + 11 < 0$$ for sufficiently large $\omega_{0}$.
Now we select $C= C(r, \alpha, n)$ sufficiently large so that 
\begin{align*} \bigg[  C \bigg\{- \frac{3}{2}\alpha s(s+1) \omega_{0}  + 11  \bigg\} |x-z|^{s-1}+ 4M  \bigg] \epsilon^{2} \le -M \tilde{C} \epsilon ^{2}
\end{align*}
then we get (\ref{eq:hfirst}).

\subsubsection{Case $(\nu_{x}-\nu_{z})_{V}^{2} < (4 - \Theta) $}
Consider two rotations  $P_{\mathbf{v}}$ and $P_{-\mathbf{v}}$ as follows: The first column vectors of $P_{-\mathbf{v}}$ and $P_{\mathbf{v}}$ are ${\mathbf{v}}$ and ${-\mathbf{v}}$, respectively. And other column vectors are the same. Then we observe,
\begin{align*} Tf_{1}  &(   x, z, P_{-\mathbf{v}},  P_{\mathbf{v}} ) - f_{1}(x,z) \\ &
\le -2 \alpha C \omega ' {(|x-z|)} \epsilon + 2\alpha C \omega ''(|x-z|)  \epsilon^{2}
+ (4M +10  |x-z|^{\gamma -2})\epsilon^{2} \\&
\le  -2 \alpha C \omega ' {(|x-z|)} \epsilon + (4M +10  |x-z|^{\gamma -2})\epsilon^{2},
\end{align*} and thus 
\begin{align*} \frac{1}{2} \big\{ Tf_{1} ( x, z,& P_{\nu_{x}}, P_{\nu_{z}} ) + Tf_{1} (  x, z,P_{-\mathbf{v}},  P_{\mathbf{v}}) \big\}  - f_{1}(x,z) \\ &
\le  \alpha C \omega ' {(|x-z|)}\{(\nu_{x}-\nu_{z})_{V} - 2 \} \epsilon + 2\alpha M\langle x+z , \nu_{x}+\nu_{z} \rangle  \epsilon \\ & + 
\frac{1}{2}C \frac{\omega'(|x-z|)}{|x-z|} \{ \alpha |(\nu_{x}-\nu_{z})_{V^{\perp}}|^{2} + \beta|\nu_{x}+\nu_{z}|^{2}  \} \epsilon^{2} \\ &
+ (4M +10  |x-z|^{\gamma -2})\epsilon^{2}.
\end{align*} 
Set $$ \kappa = \frac{ |(\nu_{x}-\nu_{z})_{V^{\perp}}|^{2}}{\Theta} .$$ Then $ 1 < \kappa \le \frac{4}{\Theta} $ by the assumption. Observe that $$|(\nu_{x}-\nu_{z})_{V}| \le  \sqrt{|\nu_{x}-\nu_{z}|^{2}-\kappa \Theta} \le \sqrt{4-\kappa \Theta} \le 2 - \frac{\kappa}{4}\Theta$$ and hence $$ |(\nu_{x}-\nu_{z})_{V^{\perp}}| \le 4( 2 - (\nu_{x}-\nu_{z})_{V}).$$ 
On the other hand, we have 
\begin{align*} |\nu_{x}+\nu_{z}|^{2} & = 4 - |\nu_{x}-\nu_{z}|^{2}
\\ & \le 4 - (\nu_{x}-\nu_{z})_{V}^{2}
\\ & \le 4 ( 2- (\nu_{x}-\nu_{z})_{V}).
\end{align*}
We observe that
\begin{align*} \frac{1}{2}& \big\{ Tf_{1} ( x, z,   P_{\nu_{x}}, P_{\nu_{z}} ) + Tf_{1} (  x, z,P_{-\mathbf{v}},  P_{\mathbf{v}} ) \big\}  - f_{1}(x,z) \\ &
\le   2\alpha M\langle x+z , \nu_{x}+\nu_{z} \rangle  \epsilon+ (2 - (\nu_{x}-\nu_{z})_{V} )  C \omega ' {(|x-z|)} \bigg( - \alpha  +
\frac{20}{N} \bigg) \epsilon \\ & \qquad
+ (4M +10  |x-z|^{\gamma -2})\epsilon^{2} ,
\end{align*} as $|x-z| > N\epsilon / 10$.

Next we estimate $  M\langle x+z , \nu_{x}+\nu_{z} \rangle \epsilon$.
We already know that $u_{\epsilon} $ satisfies H\"{o}lder type estimate for any exponent $ \delta \in (0,1) $ by Theorem \ref{thm1}.
Now by the counter assumption (\ref{counter}), 
$$ u_{\epsilon}(x,t)-u_{\epsilon}(z,s)-C \omega(|x-z|) - M|x+z|^{2}- g(t,s) \ge K - \eta > 0.$$ 
Then we see 
\begin{align*} M|x+z|^{2}    <  u_{\epsilon}(x,t)-u_{\epsilon}(z,s)  \le C_{u_{\epsilon}} (|x-z|^{1/2}+ \epsilon^{1/2})
.
\end{align*}
Note that $  C_{u_{\epsilon}}$ is a constant depending only on $r, \alpha$ and $n$.
Thus, we obtain that
\begin{align*} |x+z| & < \sqrt{ \frac{ C_{u_{\epsilon}}}{M}} (|x-z|^{1/2}+ \epsilon^{1/2})^{1/2} \\& \le  \sqrt{ \frac{ C_{u_{\epsilon}}}{M}} \bigg[ |x-z|^{1/4}+ \frac{1}{2}|x-z|^{-1/4}\epsilon^{1/2} + o (\epsilon^{1/2})\bigg]
\\& \le  \sqrt{ \frac{ C_{u_{\epsilon}}}{M}} \bigg[ |x-z|^{1/4}+ \frac{1}{2}\bigg( \frac{10}{N} \bigg)^{1/4}\epsilon^{1/4} + o (\epsilon^{1/2})\bigg].
\end{align*}
Hence we observe
\begin{align*}M\langle x+z  &, \nu_{x}+\nu_{z} \rangle \epsilon  \le 2M|x+z|\epsilon \\ &
\le 2 \sqrt{MC_{u_{\epsilon}}} |x-z|^{1/4}\epsilon+ \sqrt{MC_{u_{\epsilon}}} \bigg( \frac{10}{N} \bigg)^{1/4}\epsilon^{5/4} + o (\epsilon^{3/2}) \\&
\le 3 \sqrt{MC_{u_{\epsilon}}} |x-z|^{1/4}\epsilon 
\end{align*}
since $ \sqrt{MC_{u_{\epsilon}}} (10/N )^{1/4}\epsilon^{5/4} + o (\epsilon^{3/2}) $ is bounded by $\sqrt{MC_{u_{\epsilon}}} |x-z|^{1/2}\epsilon$.
Therefore, if we choose $ \gamma = 1+ s = 5/4 $, \begin{align*} & \frac{1}{2}   \big\{ Tf_{1} ( x, z, P_{\nu_{x}}, P_{\nu_{z}} ) + Tf_{1} (  x, z,P_{-\mathbf{v}},  P_{\mathbf{v}} ) \big\} - f_{1}(x,z) \\ &
\le   6\alpha  \sqrt{MC_{u_{\epsilon}}} |x-z|^{s}   \epsilon + C \omega ' {(|x-z|)}  \times \\ & \qquad \ \ \ \bigg[ - \alpha \kappa \frac{ |x-z|^{s} }{4} +
\frac{5}{N} \bigg\{ \alpha |(\nu_{x}-\nu_{z})_{V^{\perp}}|^{2} + \frac{\beta}{n+1}|(\rho_{x} - \rho_{z} )_{V^{\perp}}|^{2}  \bigg\} \bigg] \epsilon \\ &
+ (4M +10  |x-z|^{s-1})\epsilon^{2}.
\end{align*}
Note that $$ (4M +10  |x-z|^{s-1})\epsilon^{2} \le (4M+10) \frac{10}{N} |x-z|^{s} \epsilon. $$ Then  
\begin{align*}  \frac{1}{2}   \big\{ & Tf_{1} ( x, z, P_{\nu_{x}}, P_{\nu_{z}} ) + Tf_{1} (  x, z, P_{-\mathbf{v}},  P_{\mathbf{v}}) \big\}  - f_{1}(x,z)  \\ &
\le  6\alpha  \sqrt{MC_{u_{\epsilon}}} |x-z|^{s} \epsilon  + (2 - (\nu_{x}-\nu_{z})_{V} ) C \omega ' {(|x-z|)}    \bigg( -  \alpha +
\frac{20}{N} \bigg) \epsilon  \\ & \qquad 
+ (4M+10) \frac{10}{N}  |x-z|^{s} \epsilon.
\end{align*} 

Since we already know that $ \kappa \Theta / 4 \le 2 -(\nu_{x}-\nu_{z})_{V}  $ and $ \omega ' \in [1/2, 1] $, we see that
\begin{align*}  \frac{1}{2}   \big\{ & Tf_{1} ( x, z, P_{\nu_{x}}, P_{\nu_{z}} ) + Tf_{1} (  x, z,  P_{-\mathbf{v}},  P_{\mathbf{v}} ) \big\}  - f_{1}(x,z)  \\ &
\le  \bigg[ 6\alpha  \sqrt{MC_{u_{\epsilon}}}  + C    \bigg( -  \frac{ \alpha }{8} +
\frac{5}{N}  \bigg) \frac{ |(\nu_{x}-\nu_{z})_{V^{\perp}}|^{2}}{|x-z|^{s}}  
+ (4M+10) \frac{10}{N} \bigg] |x-z|^{s} \epsilon \\ &
\le \bigg[ 6\alpha  \sqrt{MC_{u_{\epsilon}}}  + C    \bigg( -  \frac{ \alpha }{8} +
\frac{5}{N}  \bigg)   
+ (4M+10) \frac{10}{N} \bigg] |x-z|^{s} \epsilon .
\end{align*}
Fix $N > 100/\alpha$ and choose $C = C( r, \alpha, n)$ large enough so that 
$$  6\alpha  \sqrt{MC_{u_{\epsilon}}}  + C    \bigg( -  \frac{ \alpha }{8} +
\frac{5}{N}  \bigg)   
+ (4M+10) \frac{10}{N} < 0 .$$
Then we conclude that
\begin{align*}  \frac{1}{2}   \big\{ & Tf_{1} ( x, z,  P_{\nu_{x}}, P_{\nu_{z}} ) + Tf_{1} (  x, z, P_{-\mathbf{v}},  P_{\mathbf{v}} ) \big\}  - f_{1}(x,z)  \\ &
\le \frac{N}{10} \bigg[ 6\alpha  \sqrt{MC_{u_{\epsilon}}}  + C    \bigg( -  \frac{ \alpha }{8} +
\frac{5}{N}  \bigg)   
+ (4M+10) \frac{10}{N} \bigg] |x-z|^{s-1} \epsilon^{2} \\ &
\le -M \tilde{C} \epsilon^{2} ,
\end{align*} since $ |x-z| > N \epsilon /10$. Now we obtained the desired result.

\subsection{Case \texorpdfstring{$0<|x-z| \le N \epsilon / 10$}.} It is quite similar to the H\"{o}lder case. First, we see that for any $x , z \in B_{r}$ and $h_{x}, h_{z} \in S^{n-1}$,
\begin{align*} & |f_{1}(x+\epsilon h_{x},z+\epsilon  h_{z})-f_{1}(x,z)| \\&
\le C \big|\omega(|x+\epsilon h_{x}-z-\epsilon h_{z}|)-\omega(|x-z|) \big| \\ & \qquad + M\big| |x+\epsilon h_{x}+z+\epsilon h_{z}|^{2}-|x+z|^{2} \big| \\&
\le C \big( \big||x+\epsilon h_{x}-z-\epsilon h_{z}|- |x-z| \big|+ \omega_{0}\big| |x+\epsilon h_{x}-z-\epsilon h_{z}|^{\gamma} -|x-z |^{\gamma} \big| \big) \\ &
 \qquad +M\big| |x+\epsilon h_{x}+z+\epsilon h_{z}|^{2}-|x+z|^{2} \big| \\&
\le 2C\epsilon + 2C \omega_{0} \gamma (2r)^{\gamma-1}(2\epsilon) + 8Mr\epsilon+ 4M\epsilon^{2}. 
\end{align*}
Then we can choose a constant $C >0$ such that
$$|f_{1}(x+\epsilon h_{x},z+\epsilon  h_{z})-f_{1}(x,z)| \le 20C \epsilon.  $$ 
As in the previous section,
\begin{align*} &\sup_{h_{x},h_{z} \in  S^{n-1} }  Tf_{1}( x, z,P_{h_{x}}, P_{h_{z}}  ) - f_{1}(x,z) \\& \qquad   =  \sup_{h_{x},h_{z} \in  S^{n-1} } \bigg[ \alpha\{ f_{1}(x+ \epsilon h_{x},z+ \epsilon h_{z})-f_{1}(x,z)   \} \\& \qquad \qquad \qquad \qquad  + \beta \kint_{B_{\epsilon}^{e_{1}} }\{f_{1}(x+ P_{h_{x}}h,z+P_{h_{z}} h) - f_{1}(x,z) \}  d \mathcal{L}^{n-1}(h) \bigg] \\& \qquad
\le 20\alpha C\epsilon+ 20\beta C\epsilon= 20C\epsilon
\end{align*} and note that (\ref{anes}) is still valid here.
We can find $i \in \{1, 2, \cdots , N\}$ such that  $ (i-1) \frac{ \epsilon}{10} < |x-z| \le i\frac{ \epsilon}{10} $ as in the previous section. 
Now, if $C$ is large enough,
\begin{align*} &\inf_{h_{x},h_{z} \in  S^{n-1}} Tf(  x, z, P_{h_{x}}, P_{h_{z}}   ) \\ & \le \sup_{h_{x},h_{z} \in  S^{n-1}} Tf_{1}(  x, z, P_{h_{x}}, P_{h_{z}}   )  - \sup_{h_{x},h_{z} \in  S^{n-1}} T f_{2}(  x, z, P_{h_{x}}, P_{h_{z}}   ) \\&
\le  \sup_{h_{x},h_{z} \in  S^{n-1}} T f_{1}(  x, z,P_{h_{x}}, P_{h_{z}}  ) - \alpha C^{2(N-i+1)} \epsilon\\&
= \sup_{h_{x},h_{z} \in  S^{n-1}} T f_{1}( x, z,P_{h_{x}}, P_{h_{z}}   ) - \alpha \bigg( C^{2}- \frac{2}{\alpha} \bigg) C^{2(N-i)} \epsilon - 2C^{2(N-i)} \epsilon \\&
\le \sup_{h_{x},h_{z} \in  S^{n-1}} T f_{1}(  x, z,P_{h_{x}}, P_{h_{z}}   ) - 2f_{2}(x,z) - 50C \epsilon .
\end{align*}
 Therefore, we calculate that
\begin{align*}  & \midrg_{h_{x},h_{z} \in  S^{n-1}}  Tf(  x, z,P_{h_{x}}, P_{h_{z}}   ) \\&  \qquad \le  \sup_{h_{x},h_{z} \in  S^{n-1}} T f_{1}( x, z, P_{h_{x}}, P_{h_{z}}   ) - f_{2}(x,z)-25C\epsilon
\\&  \qquad  \le  f_{1}(x,z) + 20C\epsilon -  f_{2}(x,z)-25C\epsilon.
\end{align*}
We finally choose a large constant $C > M$ depending only on $r, \alpha$ and $n$ to obtain (\ref{eq:hfirst}).

\subsection{Case \texorpdfstring{$|x-z| =0$}.} Similar to the previous section, we already showed that 
$$ |u_{\epsilon} (x,t) - u_{\epsilon} (z,s) | \le C_{2} ||u_{\epsilon}||_{\infty} ( |x-z| + \epsilon) ,$$ for any $x, z \ (x \neq z) \in B_{r}(0)$, $ -r^{2}<t<0$, $|t-s| < \epsilon^{2} / 2 $ and some $C_{2}= C_{2}(r, \alpha , n)>0$.
Then we can obtain the desired result by using the same argument as in Section 4.3.
\end{proof}

Now Lemma \ref{lem2} and Lemma {\ref{lem3}} yield the Lipschitz type regularity in the whole cylinder. 

\begin{theorem} \label{mainthm}
Let $\bar{Q}_{2r} \subset \Omega_{T}$, $0 < \alpha <1$ and $\epsilon > 0 $ be small. 
Suppose that $u_{\epsilon}$ satisfies the $\alpha$-parabolic DPP with boundary data $F \in L^{\infty}(\Gamma_{\epsilon, T} )$.
Then for any $x, z \in B_{r}(0)$ and $ -r^{2} < t, s< 0  $, 
$$ |u_{\epsilon} (x,t) - u_{\epsilon} (z,s) | \le C ||u_{\epsilon}||_{\infty} ( |x-z|+ |s-t|^{\frac{1}{2}} + \epsilon), $$ 
where $C>0$ is a constant which only depends on $r, \alpha$ and $n$.
\end{theorem}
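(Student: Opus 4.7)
The plan is to combine Lemma~\ref{lem2} and Lemma~\ref{lem3} via a triangle inequality, following the same recipe used after Lemma~\ref{lem1} to derive Theorem~\ref{thm1}, just with the H\"older exponent $\delta$ replaced by $1$ everywhere. I would split
$$ |u_\epsilon(x,t)-u_\epsilon(z,s)| \le |u_\epsilon(x,t)-u_\epsilon(z,t)| + |u_\epsilon(z,t)-u_\epsilon(z,s)|, $$
and treat the spatial piece and the temporal piece separately.

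The spatial piece is immediate: the two points $(x,t)$ and $(z,t)$ share the same time coordinate, so in particular $|t-t|=0<\epsilon^2/2$ and Lemma~\ref{lem3} applies verbatim, yielding
$$ |u_\epsilon(x,t)-u_\epsilon(z,t)| \le C\|u_\epsilon\|_\infty(|x-z|+\epsilon). $$

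For the temporal piece I would assume WLOG $s<t$ and, if $|t-s|<\epsilon^2/2$, invoke Lemma~\ref{lem3} directly (with $x=z$) to obtain $C\|u_\epsilon\|_\infty\epsilon$. Otherwise set $\rho:=\sqrt{t-s}\le \sqrt{2}\,r$ and consider the sub-cylinder $B_\rho(z)\times(t-\rho^2,t]$, translated so that Lemma~\ref{lem2} may be applied on it; the interior assumption $\bar Q_{2r}\subset\Omega_T$ (after shrinking $r$ by a harmless constant if needed) guarantees that the doubled sub-cylinder still lies in $\Omega_T$. Lemma~\ref{lem2} then gives
$$ |u_\epsilon(z,t)-u_\epsilon(z,s)| \le 18 \sup_\tau \osc_{B_{\rho+\epsilon}(z)\times(\tau-\epsilon^2/2,\tau]} u_\epsilon. $$
On every such $\epsilon^2/2$-time slice the two time coordinates are within $\epsilon^2/2$, so Lemma~\ref{lem3} applied at the scale $\rho+\epsilon$ bounds the oscillation by $C\|u_\epsilon\|_\infty(\rho+\epsilon)=C\|u_\epsilon\|_\infty(|t-s|^{1/2}+\epsilon)$.

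Combining the two halves yields the claimed bound
$$ |u_\epsilon(x,t)-u_\epsilon(z,s)| \le C\|u_\epsilon\|_\infty\bigl(|x-z|+|t-s|^{1/2}+\epsilon\bigr). $$
There is no genuine analytic obstacle here, since all the heavy lifting already sits inside Lemmas~\ref{lem2} and~\ref{lem3}; the only point that needs attention is the bookkeeping of radii, namely that the sub-cylinder of radius $\rho=\sqrt{t-s}$ together with its doubling remains inside $\Omega_T$ so that Lemma~\ref{lem2} is applicable and Lemma~\ref{lem3} may be invoked on a cylinder of spatial radius $\rho+\epsilon$. As noted in Remark~\ref{remark}, this is precisely the step where the $\epsilon$-appendage in the final estimate originates.
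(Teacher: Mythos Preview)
Your proposal is correct and follows exactly the approach the paper takes: the paper gives no separate proof of Theorem~\ref{mainthm} beyond the sentence ``Now Lemma~\ref{lem2} and Lemma~\ref{lem3} yield the Lipschitz type regularity in the whole cylinder,'' relying on the reader to rerun the argument displayed after Lemma~\ref{lem1} (for Theorem~\ref{thm1}) with $\delta=1$. Your write-up does precisely that, including the sub-cylinder $B_{\sqrt{t-s}}(z)\times[s,t]$ and the bookkeeping on radii; the only cosmetic point is that Lemma~\ref{lem3} is applied at the \emph{fixed} ambient scale (so the constant stays independent of $\rho$) and merely the spatial distance $2(\rho+\epsilon)$ is fed into its conclusion, rather than being ``applied at the scale $\rho+\epsilon$'' as you phrase it.
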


{\bf Acknowledgments}.
This work was supported by NRF-2015R1A4A1041675.
The author would like to thank M. Parviainen, for introducing this topic, valuable discussions and constant support throughout this work. 
The work was partly done while the author was visiting Univerity of Jyv\"{a}skyl\"{a} (JYU) in Finland and thanks JYU for the hospitality.

\bibliographystyle{alpha}

\end{document}